\numberwithin{equation}{section}
\newtheorem{theorem}{Theorem}
\newtheorem{lemma}[theorem]{Lemma}
\newtheorem{conjecture}[theorem]{Conjecture}
\newtheorem{proposition}[theorem]{Proposition}
\theoremstyle{remark}
\numberwithin{theorem}{section} \numberwithin{equation}{section}
\numberwithin{figure}{section}
\begin{document}
\title[Bailey pairs and a conjecture of Hikami]{Bailey pairs, radial limits of $q$-hypergeometric false theta functions, and a conjecture of Hikami}

\author{Jeremy Lovejoy}
\address{CNRS, Universit{\'e} Paris Cit\'e, B\^atiment Sophie Germain, Case Courier 7014,
8 Place Aur\'elie Nemours, 75205 Paris Cedex 13, FRANCE}
\email{lovejoy@math.cnrs.fr}

\author{Rishabh Sarma}
\address{Department of Mathematics, University of Florida, Gainesville, FL, 32611, USA}
\email{rishabh.sarma@ufl.edu}

\date{\today}
\subjclass[2020]{33D15}
\keywords{false theta functions, radial limits, Bailey pairs}

\begin{abstract}
In the first part of this paper we prove a conjecture of Hikami on the values of the radial limits of a family of $q$-hypergeometric false theta functions.    Hikami conjectured that the radial limits are obtained by evaluating a truncated version of the series.  He proved a special case of his conjecture by computing the Kashaev invariant of certain torus links in two different ways.   We prove the full conjecture using Bailey pairs.    In the second part of the paper we show how the framework of Bailey pairs leads to further results of this type.   
\end{abstract}

\maketitle
\section{Introduction}
The first main goal of this paper is to prove a conjecture of Hikami \cite{Hik} on the values of the radial limits of certain $q$-hypergeometric series.   To state Hikami's conjecture, first recall the $q$-Pochhammer symbol, defined for $n \geq 0$ by  
\begin{equation*}
(x)_n = (x;q)_n = (1-x)(1-xq) \cdots (1-xq^{n-1}),
\end{equation*}
and the $q$-binomial coefficient (or Gaussian polynomial)
\begin{equation*}
\begin{bmatrix} n \\ k \end{bmatrix} = \begin{bmatrix} n \\ k \end{bmatrix}_q = 
\begin{cases}
\frac{(q;q)_n}{(q;q)_{n-k}(q;q)_k}, & \text{if $0 \leq k \leq n$},\\
0, & \text{otherwise}.
\end{cases}
\end{equation*}
Next, for $m \geq 2$ and $0 \leq a \leq m-2$, define the $q$-hypergeometric series $\widetilde{\Phi}_m^{(a)}(q)$ for $|q| < 1$ by\footnote{Hikami included the prefactor $m$ for convenience.   For consistency, we preserve it here.}
\begin{equation*} 
\begin{aligned}
\widetilde{\Phi}_m^{(a)}(q) = mq^{\frac{(m-a-1)^2}{4m}}\sum_{n_1,\dots, n_{m-1} \geq 0}& (-1)^{n_{m-1}}q^{\binom{n_{m-1} + 1}{2} + n_1^2 + \cdots + n_{m-2}^2 + n_{a+1} + \cdots + n_{m-2}} \\
&\times \prod_{i=1}^{m-2} \begin{bmatrix} n_{i+1} + \delta_{i,a} \\ n_i \end{bmatrix}.
\end{aligned}
\end{equation*}  
Hikami \cite[Corollary 7]{Hik} showed that the series $\widetilde{\Phi}_m^{(a)}(q)$ is the weight $1/2$ false theta function
\begin{equation*}
\widetilde{\Phi}_m^{(a)}(q) = m\sum_{n \geq 0} \chi_{2m}^{(a)}(n)q^{n^2/4m},
\end{equation*}
where $\chi_{2m}^{(a)}(n)$ is the odd periodic function defined by
\begin{equation*}
\chi_{2m}^{(a)}(n) = 
\begin{cases}
1, & \mbox{if $n \equiv m-a-1 \pmod{2m}$},
\\
-1, & \mbox{if $n \equiv m+a+1 \pmod{2m}$}, \\
0, & \mbox{otherwise}.
\end{cases}
\end{equation*}

It is known that such false theta functions have well-defined limiting values as $q$ approaches a root of unity radially from inside the unit disk and that the resulting function is a quantum modular form \cite{Gos-Os}.      Hikami's conjecture states that in the case of $\widetilde{\Phi}_m^{(a)}(q)$ these radial limits are given by evaluating a truncated version of the series.    Namely, define the polynomial $Y_{m,N}^{(a)}(q)$ by 
\begin{equation*}
Y_{m,N}^{(a)}(q) = \sum_{n_1,\dots, n_{m-1} =0}^{N-1} (-1)^{n_{m-1}}q^{\binom{n_{m-1} + 1}{2} + n_1^2 + \cdots + n_{m-2}^2 + n_{a+1} + \cdots + n_{m-2}} \prod_{i=1}^{m-2} \begin{bmatrix} n_{i+1} + \delta_{i,a} \\ n_i \end{bmatrix}.
\end{equation*}
Then we have the following.   Here and throughout the paper we let $\zeta_N = e^{2 \pi i /N}$.
\begin{conjecture}[Hikami \cite{Hik}] \label{Hikamiconjecture}
For any $m \geq 2$ and $0 \leq a \leq m-2$ we have 
\begin{equation*}
\lim_{q\rightarrow\zeta_N}\widetilde{\Phi}_m^{(a)} (q) = \zeta_N^{\frac{(m-a-1)^2}{4m}} Y_{m,N}^{(a)}(\zeta_N).
\end{equation*}
\end{conjecture}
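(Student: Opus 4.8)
The plan is to interpret the series $\widetilde{\Phi}_m^{(a)}(q)$ via an iterated Bailey chain and then exploit the fact that, along the chain, the multisum telescopes into a single-variable false theta function whose radial limit can be computed directly by a finite-sum argument. Concretely, I would first recognize the innermost sum $\sum_{n_{m-1}\geq 0}(-1)^{n_{m-1}}q^{\binom{n_{m-1}+1}{2}}\qbinom{n_{m-1}+\delta}{n_{m-2}}$-type expression as coming from a known Bailey pair relative to some $a=q^{n}$ or $a=1$, and then apply the Bailey chain $m-2$ times, each iteration introducing one more summation variable $n_i$ together with the factor $q^{n_i^2}$ and (for $i>a$) the extra $q^{n_i}$. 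The linking factor $\prod \qbinom{n_{i+1}+\delta_{i,a}}{n_i}$ is exactly the shape produced by iterating the standard Bailey lemma (with the shift $\delta_{i,a}$ accounting for the single anomalous step at $i=a$), so the whole of $\widetilde{\Phi}_m^{(a)}(q)$ should equal, up to the prefactor $mq^{(m-a-1)^2/4m}$, a limiting instance of a Bailey pair summed against $1/(q)_\infty$ or a similar normalizing factor — and by Hikami's Corollary 7 that limiting instance is the false theta function $m\sum_{n\geq 0}\chi_{2m}^{(a)}(n)q^{n^2/4m}$.

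The key point for the radial limit is the following standard phenomenon: if $q\to\zeta_N$ radially, then $(q)_n\to (\zeta_N)_n$ stays finite and nonzero for $n\leq N-1$ but $(q)_N\to 0$, so a sum of the form $\sum_{n\geq 0} c_n(q) (q)_n$ (with $c_n(q)$ regular at $\zeta_N$) has radial limit equal to its own truncation $\sum_{n=0}^{N-1} c_n(\zeta_N)(\zeta_N)_n$, because every term with $n\geq N$ carries a factor $(q)_N$ and hence vanishes in the limit. I would arrange the Bailey-pair side of the identity so that precisely such a factor $(q)_\infty/(q)_n$, equivalently a reciprocal that forces $(q)_N$ into the numerator, appears; this is what makes the full multisum $\widetilde{\Phi}_m^{(a)}(q)$ collapse to the truncated polynomial $Y_{m,N}^{(a)}$ evaluated at $\zeta_N$ as $q\to\zeta_N$. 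More carefully: I would write $\widetilde{\Phi}_m^{(a)}(q) = mq^{(m-a-1)^2/4m}\bigl(\text{something}\cdot(q)_\infty\bigr)$ where the "something" is a convergent $q$-hypergeometric series whose partial sums up to $N-1$ give $Y_{m,N}^{(a)}$, split off the head $n\leq N-1$, and show the tail $n\geq N$ is $O\bigl((q)_N\bigr)\to 0$; combined with $(q)_\infty\to 0$ being cancelled appropriately, or rather with the false theta side being a genuine (non-modular) series whose radial limit is governed by the same truncation, this yields the claimed value.

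The main obstacle I anticipate is making the Bailey-chain bookkeeping rigorous at the anomalous step $i=a$: the shift by $\delta_{i,a}$ in the $q$-binomial and the extra linear terms $n_{a+1}+\cdots+n_{m-2}$ in the exponent mean that the iteration is not a pure application of the classical Bailey lemma but a "shifted" version, so I would need either an appropriate variant of the Bailey lemma (inserting a parameter that specializes to produce the shift) or a direct inductive verification that the single-variable identity, after $m-2$ shifted steps, reproduces both $\widetilde{\Phi}_m^{(a)}$ and a series whose truncation is $Y_{m,N}^{(a)}$. A secondary technical point is justifying the interchange of the radial limit with the (now finite, after truncation) sum and checking that all $q$-Pochhammer symbols appearing in denominators remain nonvanishing at $\zeta_N$ for indices below $N$; this is routine but must be stated. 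Once the shifted Bailey iteration is set up correctly, the radial-limit computation is the soft part, driven entirely by $(q)_N\to 0$.
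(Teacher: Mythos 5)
Your central mechanism for the radial limit is not correct for this family, and this is where the proposal breaks down. You argue that the limit $q\to\zeta_N$ picks out the truncation $Y_{m,N}^{(a)}(\zeta_N)$ because ``every term with $n\geq N$ carries a factor $(q)_N$ and hence vanishes.'' But $\widetilde{\Phi}_m^{(a)}(q)$ is not a series of Kontsevich--Zagier type: the outer variable $n_{m-1}$ carries only the weight $(-1)^{n_{m-1}}q^{\binom{n_{m-1}+1}{2}}$ with no Pochhammer factor, and the Gaussian binomials $\begin{bmatrix} n_{i+1}+\delta_{i,a} \\ n_i \end{bmatrix}$ do not vanish at $\zeta_N$ once the indices exceed $N$. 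Already for $m=2$, $a=0$ the series is $2q^{1/8}\sum_{n\geq 0}(-1)^nq^{n(n+1)/2}$, whose terms have modulus tending to $1$ as $q\to\zeta_N$; the series does not converge at the root of unity, no tail vanishes, and the fact that its radial limit equals the $N$-term partial sum is precisely the nontrivial content of the conjecture, not a soft consequence of termination. The correct radial limit of the false theta side is Hikami's formula $m\sum_{n=0}^{mN}\chi_{2m}^{(a)}(n)\left(1-\frac{n}{mN}\right)\zeta_N^{n^2/4m}$, obtained from the asymptotic expansion $\sum_{n}C_f(n)e^{-n^2t}\sim\sum_k L(-2k,C_f)(-t)^k/k!$ with Bernoulli-polynomial $L$-values --- note the weight $1-\frac{n}{mN}$, which a naive truncation argument cannot produce.

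What is actually needed, and what the paper does, is a two-sided computation followed by a matching step. The Bailey pair underlying the multisum (which is already in the literature, so the ``anomalous step $i=a$'' you worry about is not an obstacle) is inserted not into the limiting $n\to\infty$ form of the Bailey lemma but into a \emph{terminating} form with $n=N-1$. At a primitive $N$th root of unity the multisum side of that finite identity becomes $(q)_N\,Y_{m,N}^{(a)}(q)$ up to harmless factors, while the single-sum side also vanishes (because $\sum_{k=0}^{N-1}(1-q^{(a+1)(2k+1)})q^{mk^2+(m-a-1)k}$ is, after completing the square, a full-period sum of $\chi_{2m}^{(a)}$ against $q^{k^2/4m}$); the value $Y_{m,N}^{(a)}(\zeta_N)$ then emerges from a $0/0$ evaluation (differentiating the vanishing numerator against $1-q^N$), yielding $-\frac{1}{4mN^2}\zeta_N^{-(m-a-1)^2/4m}\sum_{k=0}^{2mN}k^2\chi_{2m}^{(a)}(k)\zeta_N^{k^2/4m}$. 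Finally one shows this agrees with Hikami's weighted sum via the reflection $k\mapsto 2mN-k$. None of these three ingredients --- the $L$-value asymptotics, the terminating Bailey lemma with its L'H\^opital evaluation, and the reflection identity --- appears in your outline, so the proposal as written does not constitute a proof.
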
   

Hikami proved his conjecture in the case $a=0$ by showing that both sides are essentially the Kashaev invariant of the torus link $T(2,2m)$.    He writes that it is unclear whether these expressions for $a \neq 0$ correspond to any quantum invariant, perhaps hinting that another method is needed to prove the full conjecture.    Here we show that the full conjecture follows from the theory of Bailey pairs.   In fact, it does so quite easily once the Bailey pair framework is set up.  Moreover, it holds for any root of unity.    

\begin{theorem} \label{mainthm}
Let $\zeta_N^M$  be a primitive $N$th root of unity with $N > 0$.    Then for any $m \geq 2$ and $0 \leq a \leq m-2$ we have 
\begin{equation*}
\lim_{q\rightarrow\zeta_N^M}\widetilde{\Phi}_m^{(a)} (q) = {\zeta_N}^{\frac{M(m-a-1)^2}{4m}} Y_{m,N}^{(a)}(\zeta_N^M).
\end{equation*}
\end{theorem}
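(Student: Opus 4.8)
The plan is to realize the $(m-1)$-fold sum defining $\widetilde{\Phi}_m^{(a)}$ as an iterated Bailey chain and then to let the Bailey machinery reduce the claimed identity to an elementary comparison of two finite sums at $\zeta_N^M$. Throughout write $c=\frac{(m-a-1)^2}{4m}$.

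I would begin by locating the Bailey pair behind $\widetilde{\Phi}_m^{(a)}$. Its summand has precisely the shape of an $(m-2)$-fold iterated Bailey chain, so I expect an explicit seed Bailey pair $(\alpha_n,\beta_n)$ relative to $1$ whose $\alpha$-sequence is the sparse, theta-type sequence attached to $\chi_{2m}^{(a)}$ — each $\alpha_n$ being, up to sign, a single power of $q$ supported on one or two residue classes. Applying the standard $q$-binomial Bailey-chain move $m-2$ times, with the move at position $a$ replaced by the variant that produces the shift $\begin{bmatrix} n_{a+1}+1\\ n_a\end{bmatrix}$ and the linear contributions $n_{a+1}+\cdots+n_{m-2}$ in the exponent, the $\beta$-side reproduces the inner $(m-2)$-fold sum with $n_{m-1}$ as the free index, while the $\alpha$-side becomes $q^{(m-2)n^2+\cdots}\alpha_n$. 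Summing over $n=n_{m-1}$ against the residual weight $(-1)^{n}q^{\binom{n+1}{2}}$ and restoring the prefactor $mq^c$ then recovers, on the $\beta$-side, $\widetilde{\Phi}_m^{(a)}(q)$ itself, and on the $\alpha$-side — after substituting the explicit $\alpha_n$ and reindexing — the false theta function $m\sum_{n\ge0}\chi_{2m}^{(a)}(n)q^{n^2/4m}$, i.e.\ \cite[Corollary 7]{Hik}, now derived inside the Bailey framework. Running the identical computation with every summation index truncated at $N-1$, using the corresponding finite Bailey pairs, identifies $q^cY_{m,N}^{(a)}(q)$ — up to a term divisible by a cyclotomic factor that vanishes at every primitive $N$th root of unity — with the analogous truncation of $\sum_{n}\chi_{2m}^{(a)}(n)q^{n^2/4m}$.

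It remains to compute radial limits. For the false theta function, write $q=r\zeta_N^M$ and let $r\to 1^-$: the sequence $\chi_{2m}^{(a)}(n)\,\zeta_N^{Mn^2/4m}$ is periodic in $n$ with period $4mN$ (since $\zeta_N^{2MNn}=1$), and its sum over a full period vanishes because the substitution $n\mapsto 4mN-n$ fixes $\zeta_N^{Mn^2/4m}$ — here one uses $\zeta_N^{2MNn}=1$ and $\zeta_N^{4mMN^2}=1$ — while reversing the sign of the odd periodic function $\chi_{2m}^{(a)}$. A standard Abel-summation argument then gives
\[
\lim_{q\to\zeta_N^M}\widetilde{\Phi}_m^{(a)}(q)=-\frac1{4N}\sum_{n=0}^{4mN-1}n\,\chi_{2m}^{(a)}(n)\,\zeta_N^{Mn^2/4m},
\]
and the same bookkeeping shows that the truncated theta sum produced in the previous paragraph, evaluated at $\zeta_N^M$, equals $\zeta_N^{-Mc}$ times precisely this sum — the contributions beyond the truncation level cancelling in pairs under $n\mapsto 4mN-n$. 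Multiplying through by $\zeta_N^{Mc}$ yields $\lim_{q\to\zeta_N^M}\widetilde{\Phi}_m^{(a)}(q)=\zeta_N^{Mc}Y_{m,N}^{(a)}(\zeta_N^M)$. Since $M$ enters only through the identities $\zeta_N^{2MNn}=1$ and $\zeta_N^{4mMN^2}=1$, the argument is uniform over all primitive $N$th roots of unity.

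I expect the main obstacle to be the first step: producing the correct seed Bailey pair and verifying that the $m-2$ iterations — in particular the exceptional move at the distinguished index $a$, which must simultaneously account for the $\begin{bmatrix} n_{a+1}+1\\ n_a\end{bmatrix}$ shift and the linear term $n_{a+1}+\cdots+n_{m-2}$ — reproduce the multisum exactly; and then, in the truncated version, checking that truncating only the outer index $n_{m-1}$ (which is what the Bailey chain does naturally) differs from truncating all of $n_1,\dots,n_{m-1}$ at $N-1$ (as in $Y_{m,N}^{(a)}$) only by terms that vanish at primitive $N$th roots of unity. Once this is in place, the radial-limit bookkeeping above is routine.
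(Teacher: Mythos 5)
Your overall strategy---realize the multisum via a Bailey pair and compare a truncated identity at roots of unity with the radial limit of the false theta function---is the same as the paper's, and your radial-limit computation (periodicity of $\chi_{2m}^{(a)}(n)\zeta_N^{Mn^2/4m}$ modulo $4mN$, vanishing mean, Abel summation giving $-\frac{1}{4N}\sum_{n=0}^{4mN-1}n\,\chi_{2m}^{(a)}(n)\zeta_N^{Mn^2/4m}$) is correct and consistent with Hikami's formula \eqref{values}. The seed Bailey pair you postulate does exist (it is \cite[Prop.~4.1]{Lov}, quoted at the start of the paper's proof), so that part is a citable gap rather than a fatal one.

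The genuine gap is in your evaluation of $Y_{m,N}^{(a)}(\zeta_N^M)$. You claim that the truncated Bailey computation identifies $q^{c}Y_{m,N}^{(a)}(q)$, up to a cyclotomic multiple, with a plain truncated theta sum, and that the comparison with the radial limit then follows by pairing $n\leftrightarrow 4mN-n$. This cannot work as stated: a truncated sum $\sum\chi_{2m}^{(a)}(n)q^{n^2/4m}$ carries no linear weight in $n$, and over a full period it simply vanishes at $q=\zeta_N^M$, whereas the radial limit carries the weight $n$ (equivalently $1-\tfrac{n}{mN}$). What the finite form of the Bailey lemma (Lemma \ref{keycorollary} with $n=N-1$) actually produces is the identity \eqref{pair1inkeycorollary}, whose left-hand side is $(q)_N$ times (a deformation of) $Y_{m,N}^{(a)}(q)$ and whose right-hand side involves the denominators $(q^{1+N})_k$. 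At a primitive $N$th root of unity both sides degenerate: the factor $(q)_N$ kills the left side, which forces the vanishing $\sum_{k=0}^{N-1}(1-q^{(a+1)(2k+1)})q^{mk^2+(m-a-1)k}=0$, and the evaluation of $Y_{m,N}^{(a)}(\zeta_N^M)$ is then a genuine $0/0$ limit computed by L'H\^opital. It is precisely this differentiation that produces the $k^2$ weights in \eqref{finiteeval1}, which after folding match the linear weights in the radial limit. Your proposal has no mechanism for generating these weights, so the final ``bookkeeping'' step would fail; you need either the L'H\^opital argument applied to the degenerate finite identity or some equivalent derivative-taking device.
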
  

In the second part of the paper we show that a similar phenomenon occurs for other families of $q$-hypergeometric false theta functions constructed using Bailey pairs.  That is, the radial limits at roots of unity are obtained by evaluating the truncated series.    See Theorems \ref{ex1theorem}, \ref{ex2theorem}, and \ref{ex3theorem}.

The paper is organized as follows.    In the next section  we prove Hikami's conjecture.    In Section 3 we review the computation of radial limits of false theta functions and present three more results along the lines of Theorem \ref{mainthm}.    We conclude with some remarks on quantum $q$-series identities in the spirit of \cite{Lo1} that arise from our work.



\section{Proof of Theorem \ref{mainthm}}
In this section we prove Theorem \ref{mainthm}.    Before proceeding to the proof we note two facts.   First, at any primitive $N$th root of unity, Hikami \cite[Proposition 10]{Hik} computed the limiting values of $\widetilde{\Phi}_m^{(a)}(q)$.   
\begin{lemma}[Hikami]
For coprime integers $M$ and $N$ with $N > 0$ we have
\begin{equation} \label{values}
\lim_{q\rightarrow\zeta_N^M}\widetilde{\Phi}_m^{(a)} (q) = m \sum_{n = 0}^{mN} \chi_{2m}^{(a)}(n) \left(1- \frac{n}{mN}\right) \zeta_N^{M n^2/ 4m}.
\end{equation} 
\end{lemma}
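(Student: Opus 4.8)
The plan is to work directly from the false theta representation $\widetilde{\Phi}_m^{(a)}(q) = m\sum_{n\geq0}\chi_{2m}^{(a)}(n)q^{n^2/4m}$ recorded above, and to extract the radial limit by summation by parts followed by an equidistribution argument. Write the radial approach to $\zeta_N^M$ as $q = \zeta_N^M e^{-t}$ with $t\to0^+$, so that $q^{n^2/4m} = \zeta_N^{Mn^2/4m}e^{-tn^2/4m}$. Setting $g(n) := \chi_{2m}^{(a)}(n)\zeta_N^{Mn^2/4m}$ and $f_t(n) := e^{-tn^2/4m}$, the task reduces to evaluating $\lim_{t\to0^+}\sum_{n\geq0}g(n)f_t(n)$, after which the claimed value follows on multiplying by $m$.

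First I would record the two structural properties of the sequence $g$. Since $\chi_{2m}^{(a)}$ has period $2m$ and a short computation gives $\zeta_N^{M(n+2mN)^2/4m} = \zeta_N^{Mn^2/4m}$, the sequence $g$ is periodic with period $P := 2mN$. More importantly, using that $\chi_{2m}^{(a)}$ is odd together with the relations $\zeta_N^{MP^2/4m}=1$ and $\zeta_N^{MNr}=1$, one finds the antisymmetry $g(P - r) = -g(r)$; in particular $g(0) = g(mN) = 0$ and $\sum_{r=0}^{P-1}g(r) = 0$. Consequently the partial sums $G(n) := \sum_{r=0}^{n-1}g(r)$ are themselves $P$-periodic, and hence uniformly bounded.

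Next I would apply Abel summation. Because $f_t(n)\to0$ while $G$ stays bounded, the boundary term vanishes and summation by parts yields $\sum_{n\geq0}g(n)f_t(n) = \sum_{n\geq0}G(n+1)\bigl(f_t(n) - f_t(n+1)\bigr)$. The weights $\mu_t(n) := f_t(n) - f_t(n+1)$ are nonnegative and telescope to $\sum_{n\geq0}\mu_t(n) = f_t(0) = 1$, so $\mu_t$ is a probability measure on $\mathbb{Z}_{\geq0}$. The key analytic point is that as $t\to0^+$ this measure concentrates on the scale $n\sim t^{-1/2}\gg P$ and varies slowly there, so it equidistributes modulo $P$: for each residue $r_0$ one has $\sum_{n\equiv r_0\,(P)}\mu_t(n)\to 1/P$. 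Since $G(n+1)$ is constant on each residue class, it follows that $\lim_{t\to0^+}\sum_{n\geq0}G(n+1)\mu_t(n) = \frac1P\sum_{n=0}^{P-1}G(n+1) =: \bar G$. I expect this equidistribution step to be the main obstacle: it is where genuine analysis rather than algebra enters, and it must be justified by comparing $\mu_t$ with the smooth density $\tfrac{nt}{2m}e^{-n^2t/4m}$ (whose integral over $[0,\infty)$ is $1$) and bounding the relative fluctuation of $\mu_t$ across a single period of length $P$.

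Finally I would evaluate $\bar G$ in closed form. Swapping the order of summation gives $\bar G = \frac1P\sum_{n=1}^{P}\sum_{r=0}^{n-1}g(r) = \sum_{r=0}^{P-1}g(r)\bigl(1 - \tfrac{r}{P}\bigr)$. Pairing the term $r$ with $P-r$ and invoking the antisymmetry $g(P-r) = -g(r)$ collapses the taper $1 - r/P$ into $1 - r/(mN)$ while halving the range of summation, and the endpoints $r=0$ and $r=mN$ drop out since $g(0)=g(mN)=0$. This produces $\bar G = \sum_{n=0}^{mN}\chi_{2m}^{(a)}(n)\bigl(1 - \tfrac{n}{mN}\bigr)\zeta_N^{Mn^2/4m}$, and multiplying by $m$ gives precisely the value claimed in \eqref{values}.
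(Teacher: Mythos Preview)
Your argument is sound; the equidistribution step you flag can be made rigorous along exactly the lines you sketch, since in fact $\mu_t(n)=\int_n^{n+1}\tfrac{xt}{2m}e^{-x^2t/4m}\,dx$ holds identically, and a derivative bound on this density shows that the mass in any fixed residue class mod $P$ differs from $1/P$ by $O(P\sqrt t)$.

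The paper does not actually prove this lemma---it is simply quoted from Hikami---but for the parallel radial-limit formulas in Section~3 (Propositions~\ref{ex1prop2}, \ref{ex2prop2}, \ref{ex3prop2}) it employs a different device: the Mellin-type asymptotic of Lemma~\ref{valuesgeneral}, which identifies the limiting value directly as $L(0,C_f)=-\sum_{n=1}^f C_f(n)B_1(n/f)$ for the periodic mean-zero sequence $C_f(n)=\chi_{2m}^{(a)}(n)\,\zeta_N^{Mn^2/4m}$. Your Abel summation recovers this same linear weight $1-r/P$ via the swapped double sum for $\bar G$, so from that point on the folding under $r\leftrightarrow P-r$ matches the paper's manipulation with $B_1(x)=x-\tfrac12$ line for line. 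What genuinely differs is how the constant term is isolated: your route is more elementary, requiring neither $L$-functions nor Bernoulli polynomials, while Lemma~\ref{valuesgeneral} buys the full asymptotic expansion in $t$ at once rather than only the leading value.
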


Second, we need a result from the classical theory of Bailey pairs.    Recall that a pair of sequences $(\alpha_n,\beta_n)$ is called a Bailey pair relative to $(a,q)$ if 
\begin{equation} \label{pairdef}
\beta_n = \sum_{k=0}^n \frac{\alpha_k}{(q)_{n-k}(aq)_{n+k}}.
\end{equation}
The Bailey lemma (see \cite[p. 270]{And} or \cite[Lemma 5.1]{Be-Wa1}) says that if $(\alpha_n,\beta_n)$ is a Bailey pair relative to $(a,q)$ then so is $(\alpha_n',\beta_n')$, where
\begin{equation} \label{alphaprime}
\alpha_n' = \frac{(b)_n(c)_n}{(aq/b)_n(aq/c)_n}(aq/bc)^n \alpha_n
\end{equation}
and
\begin{equation} \label{betaprime}
\beta_n' = \frac{(aq/bc)_n}{(q)_n(aq/b)_n(aq/c)_n}\sum_{k=0}^n \frac{(b)_k(c)_k(q^{-n})_kq^k}{(bcq^{-n}/a)_k}\beta_k.
\end{equation}
Putting this into \eqref{pairdef} with $b,c \to \infty$ and using 
$$
(q)_{n-k} = \frac{(q)_n}{(q^{-n})_k} (-1)^kq^{\binom{k}{2} - nk},
$$
we have the following.
\begin{lemma} \label{keycorollary}
If $(\alpha_n,\beta_n)$ is a Bailey pair relative to $(q,q)$, then 
\begin{equation*}
(q^2)_n\sum_{k=0}^n(q^{-n})_k(-1)^kq^{\binom{k+1}{2} + (n+1)k}\beta_k = \sum_{k=0}^{n} \frac{(q^{-n})_k}{(q^{2+n})_k}(-1)^kq^{\binom{k+1}{2} +(n+1)k} \alpha_k.
\end{equation*}
\end{lemma}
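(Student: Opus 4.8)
The plan is to start from the defining relation \eqref{pairdef} for a Bailey pair relative to $(q,q)$, apply the Bailey lemma \eqref{alphaprime}--\eqref{betaprime} once with $a = q$, and then send $b,c \to \infty$ in a controlled way. First I would write down \eqref{pairdef} for the new pair $(\alpha_n', \beta_n')$ with $a = q$, i.e.
\begin{equation*}
\beta_n' = \sum_{k=0}^n \frac{\alpha_k'}{(q)_{n-k}(q^2)_{n+k}},
\end{equation*}
and substitute the formulas \eqref{alphaprime} and \eqref{betaprime} for $\alpha_k'$ and $\beta_n'$. On the left-hand side the factor $(aq/bc)_n = (q^2/bc)_n$ and the inner product $(b)_k(c)_k/(bcq^{-n}/a)_k = (b)_k(c)_k/(bcq^{-n-1})_k$ must be analysed as $b,c \to \infty$: the ratio $(b)_k(c)_k/(bcq^{-n-1})_k$ tends to $q^{(n+1)k}$ (each of the $k$ factors contributes $bcq^j / (bcq^{-n-1+j}) = q^{n+1}$ in the limit), while $(q^2/bc)_n \to 1$. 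Similarly on the right-hand side $\alpha_k' \to \alpha_k$ since $(b)_k(c)_k/((aq/b)_k(aq/c)_k)(aq/bc)^k \to 1$ after the same cancellation. This produces
\begin{equation*}
\frac{1}{(q)_n(q^2)_n}\sum_{k=0}^n (q^{-n})_k q^{(n+1)k} q^k \beta_k = \sum_{k=0}^n \frac{\alpha_k}{(q)_{n-k}(q^2)_{n+k}},
\end{equation*}
where I have also used $(aq/b)_n(aq/c)_n \to 1$ in the denominator of the left side.

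Next I would clear the remaining $q$-Pochhammer symbols involving $n-k$ and $n+k$. For the left side I multiply through by $(q)_n(q^2)_n$. For the right side I rewrite $(q)_{n-k}$ using the stated identity $(q)_{n-k} = \frac{(q)_n}{(q^{-n})_k}(-1)^k q^{\binom{k}{2}-nk}$, and I rewrite $(q^2)_{n+k} = (q^2)_n (q^{n+2})_k$. Then
\begin{equation*}
\frac{\alpha_k}{(q)_{n-k}(q^2)_{n+k}} = \frac{\alpha_k (q^{-n})_k (-1)^k q^{nk - \binom{k}{2}}}{(q)_n (q^2)_n (q^{n+2})_k}.
\end{equation*}
Multiplying the whole identity by $(q)_n(q^2)_n$ now gives, after tidying the exponent of $q$ on the right via $nk - \binom{k}{2} = \binom{k+1}{2} - k + nk$... wait, $-\binom{k}{2} = \binom{k+1}{2} - k$ only up to sign; the correct manipulation is $nk - \binom{k}{2} = -\left(\binom{k}{2} - nk\right)$, and combining with the $q^k$ we will want to reach the target exponent $\binom{k+1}{2} + (n+1)k$. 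I would carefully check that $\binom{k+1}{2} + (n+1)k - \left(\binom{k}{2} - nk + k\right)$ simplifies correctly; since $\binom{k+1}{2} - \binom{k}{2} = k$, one gets $\binom{k+1}{2}+(n+1)k$ on the right matching the desired form after accounting for the $(-1)^k$ and the $(q^{-n})_k$ factor. On the left side the factor $q^{(n+1)k+k} = q^{(n+2)k}$ needs to be reconciled with $q^{\binom{k+1}{2}+(n+1)k}$; this discrepancy is absorbed because the identity as stated has $\binom{k+1}{2}$ on both sides, so I should instead not separate the $q^k$ prematurely but carry all exponents symbolically until the end.

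The routine but error-prone step — and the one I expect to be the main obstacle — is bookkeeping the powers of $q$ and the signs $(-1)^k$ so that exactly the same factor $(q^{-n})_k(-1)^kq^{\binom{k+1}{2}+(n+1)k}$ appears inside both sums; everything else (the limit $b,c\to\infty$, the Pochhammer splittings) is mechanical. One clean way to organize this is to first prove the ``raw'' limiting form of the Bailey lemma with $b,c\to\infty$ as a standalone step (the iterated Bailey pair relative to $(q,q)$ has $\alpha_n \mapsto \alpha_n$ and $\beta_n \mapsto \frac{1}{(q)_n}\sum_k (q^{-n})_k q^{nk+k}\beta_k$, say), and only then feed this into \eqref{pairdef} and apply the $(q)_{n-k}$ substitution, so that the two combinatorial rearrangements are not entangled. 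I would close by remarking that the identity holds as an identity of rational functions in $q$ (both sides are finite sums), so no convergence issues arise and the limit interchange is legitimate.
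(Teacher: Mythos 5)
Your strategy is exactly the paper's: apply the Bailey lemma with $a=q$ and $b,c\to\infty$, feed the new pair into \eqref{pairdef}, and clear denominators with the identity for $(q)_{n-k}$. But the execution has concrete errors in the limits, and they are not mere bookkeeping that will ``absorb'' itself — they are precisely where the factors $(-1)^k$, $q^{\binom{k+1}{2}}$, and the correct right-hand side come from. As $x\to\infty$ one has $(x)_k\sim(-x)^kq^{\binom{k}{2}}$, so
\begin{equation*}
\frac{(b)_k(c)_k}{(bcq^{-n-1})_k}\longrightarrow \frac{(-b)^k(-c)^kq^{2\binom{k}{2}}}{(-bc)^kq^{-(n+1)k}q^{\binom{k}{2}}}=(-1)^kq^{\binom{k}{2}+(n+1)k},
\end{equation*}
not $q^{(n+1)k}$: your factor-by-factor computation drops both the sign from $(1-bcq^{-n-1+j})\sim -bcq^{-n-1+j}$ and the residual $q^{j}$ from $(bq^j)(cq^j)/(bcq^{j})$. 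Combined with the explicit $q^k$ in \eqref{betaprime} this yields $(-1)^kq^{\binom{k+1}{2}+(n+1)k}$, which is the factor appearing in the lemma. Likewise $\alpha_k'\not\to\alpha_k$: the multiplier $\frac{(b)_k(c)_k}{(q^2/b)_k(q^2/c)_k}(q^2/bc)^k\to q^{k^2+k}$, and this surviving factor is essential, since the right-hand exponent of the lemma arises exactly from $k^2+k+nk-\binom{k}{2}=\binom{k+1}{2}+(n+1)k$ after you substitute $(q)_{n-k}=\frac{(q)_n}{(q^{-n})_k}(-1)^kq^{\binom{k}{2}-nk}$ and $(q^2)_{n+k}=(q^2)_n(q^{n+2})_k$. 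Finally, your intermediate display has a spurious $(q^2)_n$ in the left-hand denominator; the prefactor of $\beta_n'$ tends to $1/(q)_n$ only, and the $(q^2)_n$ in the lemma appears later, when you multiply through by $(q)_n(q^2)_n$.

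With these three corrections the argument closes exactly as you outline, so the gap is reparable; but as written, the limits you actually state would not produce the claimed identity, and the closing appeal to ``carrying exponents symbolically'' does not substitute for computing them.
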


With these two lemmas in hand, we are ready to prove Hikami's conjecture.
\begin{proof}[Proof of Theorem \ref{mainthm}]
We begin by recalling the Bailey pair relative to $(q,q)$ in \cite[Prop. 4.1]{Lov}, 
$$
\alpha_{n}=\frac{(1-q^{(a+1)(2n+1)})(-1)^nq^{\binom{n+1}{2}+(a+1)n^2+(m-a-1)(n^2+n)}}{1-q}
$$
and
$$
\beta_{n} = \beta_{n_m} = \sum_{n_1,\dots , n_{m-1} \geq 0} \frac{q^{n_1^2+\cdots+n_{m-1}^2+n_{a+1}+\cdots+n_{m-1}}}{(q)_{n_m}}\prod_{i=1}^{m-1} \begin{bmatrix} n_{i+1} + \delta_{a,i} \\ n_i \end{bmatrix}.
$$
Here $m \geq 1$ and $0 \leq a \leq m-1$.  When $m=1$ (and $a=0$) the above sum is understood to be empty, and we have
$$
\beta_n = \beta_{n_1}  = \frac{1}{(q)_{n_1}}.
$$

Using this Bailey pair with $m\mapsto m-1$ and substituting into Lemma \ref{keycorollary} with $n=N-1$ we obtain

\begin{equation} \label{pair1inkeycorollary}
\begin{aligned}
(q)_N\sum_{n_1,\dots,n_{m-1} = 0}^{N-1}& \frac{(q^{1-N})_{n_{m-1}}(-1)^{n_{m-1}}q^{\binom{n_{m-1}+1}{2} + Nn_{m-1} +n_1^2+\cdots+n_{m-2}^2 +  n_{a+1}+\cdots+n_{m-2}}}{(q)_{n_{m-1}}} \\
&\times \prod_{i=1}^{m-2} \begin{bmatrix} n_{i+1} + \delta_{a,i} \\ n_i \end{bmatrix} \\
= &\sum_{k=0}^{N-1} \frac{(q^{1-N})_k}{(q^{1+N})_k}(1-q^{(a+1)(2k+1)})q^{mk^2+(m-a-1)k + Nk}.
\end{aligned}
\end{equation}
Note that for $q$ a primitive $N$th root of unity the left hand side vanishes, giving
\begin{align*}
0 &= \sum_{k=0}^{N-1} (1-q^{(a+1)(2k+1)})q^{mk^2+(m-a-1)k} \\ 
&= q^{\frac{-(m-a-1)^2}{4m}}\sum_{k=0}^{2mN}\chi_{2m}^{(a)}(k)q^{k^2/4m},
\end{align*}
where the second equality follows from a short computation involving completing the square.    Dividing both sides of \eqref{pair1inkeycorollary} by $(q)_N$ and taking $\lim_{q \to \zeta_N^M}$ we then have 
\begin{align}
Y_{m,N}^{(a)}(\zeta_N^M) &= \lim_{q \to \zeta_N^M} \frac{1}{(q)_N}\sum_{k=0}^{N-1} \frac{(q^{1-N})_k}{(q^{1+N})_k}(1-q^{(a+1)(2k+1)})q^{mk^2+(m-a-1)k + Nk} \nonumber \\
&= \frac{1}{N}  \lim_{q \to \zeta_N^M} \frac{1}{1-q^N} q^{\frac{-(m-a-1)^2}{4m}}\sum_{k=0}^{2mN}\chi_{2m}^{(a)}(k)q^{k^2/4m} \nonumber \\
&= -\frac{1}{4mN^2} \zeta_N^{\frac{-M(m-a-1)^2}{4m}} \sum_{k=0}^{2mN}k^2 \chi_{2m}^{(a)}(k)\zeta_N^{Mk^2/4m}. \label{finiteeval1}
\end{align}
In the above we have used the fact that for $q$ a primitive $N$th root of unity,
$$
\prod_{i=1}^{N-1} (1-q^i x) = \frac{1-x^N}{1-x},
$$
which gives
$$
(q;q)_{N-1} = N.
$$

Now, to finish the proof we compute 
\begin{align*}
\zeta_N^{\frac{M(m-a-1)^2}{4m}} Y_{m,N}^{(a)}(\zeta_N^M) &=  -\frac{1}{4mN^2}  \sum_{k=0}^{2mN}k^2 \chi_{2m}^{(a)}(k)\zeta_N^{Mk^2/4m} \\
&=  -\frac{1}{4mN^2}  \Bigg( \sum_{k=0}^{mN}k^2 \chi_{2m}^{(a)}(k)\zeta_N^{Mk^2/4m} + \sum_{k=mN}^{2mN}k^2 \chi_{2m}^{(a)}(k)\zeta_n^{Mk^2/4m} \Bigg) \\
&=  -\frac{1}{4mN^2}  \Bigg( \sum_{k=0}^{mN}k^2 \chi_{2m}^{(a)}(k)\zeta_N^{Mk^2/4m} \\
&\hskip1in + \sum_{k=0}^{mN}(2mN-k)^2 \chi_{2m}^{(a)}(2mN-k)\zeta_N^{M(2mN-k)^2/4m} \Bigg) \\
&= -\frac{1}{4mN^2}  \Bigg( \sum_{k=0}^{mN}k^2 \chi_{2m}^{(a)}(k)\zeta_N^{Mk^2/4m} - \sum_{k=0}^{mN}(2mN-k)^2 \chi_{2m}^{(a)}(k)\zeta_N^{Mk^2/4m} \Bigg) \\
&= \sum_{k=0}^{mN} \chi_{2m}^{(a)}(k)\left(m - \frac{k}{N}\right)\zeta_N^{Mk^2/4m} \\
&= \lim_{q\rightarrow\zeta_N^M}\widetilde{\Phi}_m^{(a)}(q),
\end{align*}
the last equality following from \eqref{values}.   This establishes Theorem \ref{mainthm}.

\end{proof}

\section{Further examples}
\subsection{Preliminaries}

In this section we find further examples of families of $q$-hypergeometric false theta functions of weight $1/2$ whose radial limits are given by evaluating a truncated version of the $q$-series.    There are certainly more, but we limit ourselves to applications of some Bailey pairs which have already played a role in the study of $q$-series and weight $3/2$ false theta functions at roots of unity \cite{Lov}.    Here we will need a specialization of the Bailey lemma different from Lemma \ref{keycorollary}.     Namely, if we take $a=b = q$ and $c \to \infty$ in \eqref{alphaprime} and \eqref{betaprime} and then use the definition \eqref{pairdef} with $n \to \infty$ we have the following, which is well-known.
\begin{lemma} \label{keylemma2}
If $(\alpha_n,\beta_n)$ is a Bailey pair relative to $(q,q)$, then 
\begin{equation*}
\sum_{n \geq 0}(q)_n(-1)^nq^{\binom{n+1}{2}}\beta_n = (1-q)\sum_{n \geq 0} (-1)^nq^{\binom{n+1}{2}}\alpha_n.
\end{equation*} 
\end{lemma}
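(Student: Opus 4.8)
The plan is to follow the recipe indicated just above the statement: starting from a Bailey pair $(\alpha_n,\beta_n)$ relative to $(q,q)$, apply the Bailey lemma with $a=b=q$ and $c\to\infty$ to produce a new Bailey pair $(\alpha_n',\beta_n')$ relative to $(q,q)$, then pass to the limit $n\to\infty$ in the defining relation \eqref{pairdef} for this new pair.

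First I would record the two elementary asymptotics that do all the work. As $c\to\infty$ one has $(c)_n\sim(-c)^nq^{\binom{n}{2}}$ and $(q^2/c)_n\to1$, so that in \eqref{alphaprime} with $a=b=q$ (where the factor $(b)_n/(aq/b)_n=(q)_n/(q)_n$ cancels)
$$
\alpha_n' = \frac{(c)_n}{(q^2/c)_n}\Bigl(\frac{q}{c}\Bigr)^n\alpha_n \;\longrightarrow\; (-1)^nq^{\binom{n+1}{2}}\alpha_n,
$$
while in \eqref{betaprime} with $a=b=q$ the ratio $(c)_k/(cq^{-n})_k\to q^{nk}$, giving
$$
\beta_n' = \frac{1}{(q)_n^2}\sum_{k=0}^n (q)_k\,(q^{-n})_k\,q^{(n+1)k}\,\beta_k.
$$
Both limits are immediate from the product definition of the $q$-Pochhammer symbol together with $1-q^{i-n}=-q^{i-n}(1-q^{n-i})$; the same identity also yields the second asymptotic I need, namely $(q^{-n})_k\,q^{(n+1)k}\to(-1)^kq^{\binom{k+1}{2}}$ as $n\to\infty$ for each fixed $k$.

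Since $(\alpha_n',\beta_n')$ is a Bailey pair relative to $(q,q)$ and $(aq)_{n+k}=(q^2)_{n+k}$ here, relation \eqref{pairdef} for it reads $\beta_n'=\sum_{k=0}^n\alpha_k'/\bigl((q)_{n-k}(q^2)_{n+k}\bigr)$. Letting $n\to\infty$: on the left, the displayed formula for $\beta_n'$ together with $(q)_n\to(q)_\infty$ and the asymptotic for $(q^{-n})_kq^{(n+1)k}$ gives $(q)_\infty^{-2}\sum_{k\geq0}(q)_k(-1)^kq^{\binom{k+1}{2}}\beta_k$; on the right, $(q)_{n-k}\to(q)_\infty$ and $(q^2)_{n+k}\to(q^2)_\infty=(q)_\infty/(1-q)$ give $\dfrac{1-q}{(q)_\infty^2}\sum_{k\geq0}(-1)^kq^{\binom{k+1}{2}}\alpha_k$. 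Equating the two and cancelling the common factor $(q)_\infty^{-2}$ yields the claimed identity.

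There is no genuine obstacle here — this is a well-known specialization of the Bailey lemma — but the one point I would state carefully rather than wave away is the justification of the termwise passage to the limit under the sums (i.e.\ dominated convergence); for $|q|<1$ this is routine given the Gaussian decay $q^{\binom{k+1}{2}}$ and the usual growth hypotheses on $(\alpha_k,\beta_k)$, and in the applications one may also read the identity formally. As an alternative one could bypass the Bailey lemma entirely: substitute \eqref{pairdef} directly into $\sum_{n\geq0}(q)_n(-1)^nq^{\binom{n+1}{2}}\beta_n$, interchange the order of summation, and evaluate the resulting inner sum $\sum_{n\geq k}(q)_n(-1)^nq^{\binom{n+1}{2}}\big/\bigl((q)_{n-k}(q^2)_{n+k}\bigr)$ in closed form as $(1-q)(-1)^kq^{\binom{k+1}{2}}$ using Euler's identity $\sum_{m\geq0}q^{\binom{m}{2}}z^m/(q)_m=(-z;q)_\infty$; this is slightly longer but self-contained.
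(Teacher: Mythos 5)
Your proof is correct and follows exactly the route the paper indicates (the paper only sketches it in the sentence preceding the lemma: set $a=b=q$, let $c\to\infty$ in \eqref{alphaprime}--\eqref{betaprime}, then let $n\to\infty$ in \eqref{pairdef}); your limit computations for $\alpha_n'$, $\beta_n'$, and $(q^{-n})_kq^{(n+1)k}$ are all accurate. Nothing further is needed.
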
 

We also require a formula which allows us to compute the radial limits of false theta functions as we approach a root of unity.    We cite this more or less verbatim from Hikami \cite[Proposition 9]{Hik}.
\begin{lemma} \label{valuesgeneral}
Let $C_f(n)$ be a periodic function with mean value $0$ and modulus $f$.   Then as $t \searrow 0$ we have the asymptotic expansion
\begin{equation*}
\sum_{n \geq 1} C_f(n)e^{-n^2t} \sim \sum_{k \geq 0} L(-2k,C_f)\frac{(-t)^k}{k!},
\end{equation*}
where
\begin{equation*}
L(-k,C_f) = - \frac{f^k}{k+1}\sum_{n=1}^f C_f(n)B_{k+1}\left(\frac{n}{f}\right),
\end{equation*}
with $B_k(x)$ being the $k$th Bernouilli polynomial.    
\end{lemma}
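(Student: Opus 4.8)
The plan is to use the Mellin transform method, which is the standard route to this kind of asymptotic expansion (essentially Zagier's argument). First associate to $C_f$ the Dirichlet series $L(s,C_f) = \sum_{n \geq 1} C_f(n)n^{-s}$, absolutely convergent for $\Re(s) > 1$. Since $C_f$ is periodic of modulus $f$, one has $L(s,C_f) = f^{-s}\sum_{r=1}^{f} C_f(r)\,\zeta(s,r/f)$, where $\zeta(s,x) = \sum_{k \geq 0}(k+x)^{-s}$ is the Hurwitz zeta function. As $\zeta(s,x)$ is meromorphic in $s$ with a single simple pole at $s=1$ of residue $1$, and the residues of the $f$ summands add up to $f^{-1}\sum_{r=1}^f C_f(r) = 0$ by the mean-zero hypothesis, the function $L(s,C_f)$ extends to an entire function of $s$. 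Moreover $\zeta(-k,x) = -B_{k+1}(x)/(k+1)$ yields at once the stated special values $L(-k,C_f) = -\tfrac{f^{k}}{k+1}\sum_{r=1}^{f} C_f(r)B_{k+1}(r/f)$.

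Next, from $\int_0^\infty e^{-n^2 t}t^{s-1}\,dt = \Gamma(s)\,n^{-2s}$ one gets, for any $c > 1/2$,
\begin{equation*}
\sum_{n \geq 1} C_f(n)e^{-n^2 t} = \frac{1}{2\pi i}\int_{\Re(s)=c} \Gamma(s)\,L(2s,C_f)\,t^{-s}\,ds,
\end{equation*}
the interchange of sum and integral being justified by absolute convergence. Now I would shift the contour leftward to the line $\Re(s) = -K - \tfrac14$ for an arbitrary positive integer $K$. The integrand is holomorphic in the strip except for simple poles of $\Gamma(s)$ at $s = -k$ for $k = 0,1,\dots,K$, where the residue is $\tfrac{(-1)^k}{k!}L(-2k,C_f)\,t^{k}$. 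Collecting these residues reproduces $\sum_{k=0}^{K} L(-2k,C_f)\tfrac{(-t)^k}{k!}$, and the integral over $\Re(s) = -K-\tfrac14$ is the remainder.

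To complete the argument I would bound that remainder. Standard estimates for the Hurwitz zeta function (equivalently, the functional equation together with Stirling and Phragmén–Lindelöf) give polynomial growth $|L(2s,C_f)| = O\big((1+|\Im(s)|)^{A}\big)$ uniformly on the relevant vertical strip, while Stirling's formula gives $|\Gamma(s)| = O\big(e^{-\frac{\pi}{2}|\Im(s)|}(1+|\Im(s)|)^{B}\big)$; hence the integrand decays exponentially in $|\Im(s)|$, the contributions of the connecting horizontal segments vanish in the limit, and since $|t^{-s}| = t^{K + 1/4}$ on the shifted line the remainder is $O\big(t^{K+1/4}\big)$ as $t \searrow 0$. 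As $K$ is arbitrary, this is precisely the asserted asymptotic expansion. The step I expect to require the most care is this uniform vertical-growth bound for $L(\cdot,C_f)$, since it is what legitimizes the contour shift and upgrades the formal residue sum to a genuine asymptotic series; the remaining manipulations with $\Gamma$ and Hurwitz zeta values are routine. (One could alternatively bypass complex analysis entirely by applying Euler–Maclaurin summation to $\sum_{r=1}^{f} C_f(r)\sum_{k \geq 0} e^{-(r+kf)^2 t}$, but the Mellin argument is cleaner and makes the emergence of the values $L(-2k,C_f)$ transparent.)
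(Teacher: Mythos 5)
Your Mellin-transform argument is correct and complete in outline: the decomposition of $L(s,C_f)$ into Hurwitz zeta functions, the cancellation of the pole at $s=1$ via the mean-zero hypothesis, the special values $\zeta(-k,x)=-B_{k+1}(x)/(k+1)$, and the contour shift with the $O(t^{K+1/4})$ remainder bound are all exactly what is needed. The paper itself gives no proof of this lemma --- it is quoted verbatim from Hikami's Proposition 9 --- and the proof there (going back to Lawrence--Zagier) is precisely this Mellin/residue computation, so your route coincides with that of the cited source.
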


In each of the following subsections we first use a known Bailey pair together with Lemma \ref{keylemma2} to produce a family of $q$-hypergeometric false theta functions.   These are not all necessarily new, but we include the derivations for completeness.    Next we use Lemma \ref{valuesgeneral} to compute the radial limits of the false theta functions.    Finally, we use the same Bailey pair in Lemma \ref{keycorollary} to produce a truncated version of the $q$-series whose values at roots of unity coincide with the radial limits of the infinite series.  

\subsection{Example 1} For $m \geq 2$ define 
\begin{equation*}
\begin{aligned}
\widetilde{\Psi}_m(q) = \frac{2m-1}{2}q^{\frac{(2m-3)^2}{8(2m-1)}}\sum_{n_1,\dots,n_{m-1} \geq 0}& \frac{(-1)^{n_{m-1}}q^{\binom{n_{m-1}+1}{2}+n_1^2+\cdots+n_{m-2}^2+n_{1}+\cdots+n_{m-2}}}{(-q)_{n_1}} \\
&\times \prod_{i=1}^{m-2} \begin{bmatrix} n_{i+1} \\ n_i \end{bmatrix}
\end{aligned}
\end{equation*}
and its truncated version 
\begin{equation*}
Z_{m,N}(q) = \sum_{n_1, \dots, n_{m-1} = 0}^{N-1} \frac{(-1)^{n_{m-1}}q^{\binom{n_{m-1}+1}{2}+n_1^2+\cdots+n_{m-2}^2+n_{1}+\cdots+n_{m-2}}}{(-q)_{n_1}}\prod_{i=1}^{m-2} \begin{bmatrix} n_{i+1} \\ n_i \end{bmatrix}.
\end{equation*}
Our goal is to prove that the radial limits of $\widetilde{\Psi}_m(q)$ are well-defined at odd roots of unity and that the values are essentially given by $Z_{m,N}(q)$.   We restrict to odd roots of unity to avoid poles arising from the term $(-q)_{n_1}$ in the denominator of $Z_{m,N}(q)$.  
\begin{theorem} \label{ex1theorem}
Let $\zeta_N^M$ be a primitive odd $N$th root of unity.    Then 
\begin{equation*}
\lim_{q\rightarrow\zeta_N^M}\widetilde{\Psi}_m(q) = \zeta_N^{\frac{M(2m-3)^2}{8(2m-1)}} Z_{m,N}(\zeta_N^M).
\end{equation*}
\end{theorem}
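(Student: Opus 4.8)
The plan is to rerun, almost verbatim, the three-step strategy behind Theorem \ref{mainthm}, replacing the Bailey pair of \cite[Prop. 4.1]{Lov} by its companion from \cite{Lov} in which each link of the Bailey chain also carries a factor $1/(-q)_{n_1}$: that is, a pair $(\alpha_n,\beta_n)$ relative to $(q,q)$ whose $\beta_n$ is
\begin{equation*}
\beta_{n_m}=\sum_{n_1,\dots,n_{m-1}\ge 0}\frac{q^{n_1^2+\cdots+n_{m-1}^2+n_1+\cdots+n_{m-1}}}{(q)_{n_m}(-q)_{n_1}}\prod_{i=1}^{m-1}\begin{bmatrix} n_{i+1}\\ n_i\end{bmatrix},
\end{equation*}
and whose $\alpha_n$ has the ``theta'' shape $\alpha_n=\dfrac{(1-q^{c(2n+1)})(-1)^nq^{Q(n)}}{1-q}$ for a suitable constant $c$ and quadratic $Q$, exactly as the $\alpha_n$ of \cite[Prop. 4.1]{Lov} does. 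Pinning down this pair in \cite{Lov} and checking that $\alpha_n$ really is, up to sign, a single power of $q$ is the one external ingredient; the rest is the same bookkeeping as in Section 2, including the index shift $m\mapsto m-1$.

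First I would feed this pair (with $m\mapsto m-1$) into Lemma \ref{keylemma2}. On the left the factor $(q)_n$ cancels $1/(q)_{n_m}$, and after renaming the free variable $n\mapsto n_{m-1}$ one recovers exactly the multisum defining $\widetilde\Psi_m(q)$, up to the prefactor $\tfrac{2m-1}{2}q^{(2m-3)^2/(8(2m-1))}$. On the right, $(1-q)\sum_{n\ge 0}(-1)^nq^{\binom{n+1}{2}}\alpha_n$ collapses to $\sum_{n\ge 0}(1-q^{c(2n+1)})q^{\binom{n+1}{2}+Q(n)}$, and completing the square rewrites this as $q^{-(2m-3)^2/(8(2m-1))}\sum_{n\ge 0}C(n)q^{n^2/(8(2m-1))}$ for an explicit odd periodic function $C$ of some modulus $f$. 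This is the analogue of \cite[Corollary 7]{Hik} for this family; in particular it exhibits $\widetilde\Psi_m(q)$ as a weight $1/2$ false theta function, so that at odd roots of unity---where $(-q)_{n_1}$ has no zero for $0\le n_1\le N-1$---the radial limits are well defined.

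Next I would compute those radial limits by applying Lemma \ref{valuesgeneral} with $C_f=C$, exactly as Hikami does in deriving \eqref{values}: approaching $\zeta_N^M$ radially, only the $k=0$ term of the asymptotic expansion survives, and one reads off a finite-sum formula
\begin{equation*}
\lim_{q\to\zeta_N^M}\widetilde\Psi_m(q)=\tfrac{2m-1}{2}\,\zeta_N^{M(2m-3)^2/(8(2m-1))}\,c'\sum_{n=0}^{fN}C(n)\Big(1-\tfrac{n}{fN}\Big)\zeta_N^{Mn^2/(8(2m-1))}
\end{equation*}
for an explicit rational constant $c'$. Finally I would put the same Bailey pair into Lemma \ref{keycorollary} with $n=N-1$. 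Arguing as in and after \eqref{pair1inkeycorollary}, the left side is $(q)_N$ times a truncated multisum which, using $(q^{1-N})_{n_{m-1}}/(q)_{n_{m-1}}\to 1$ and $q^{Nn_{m-1}}\to 1$ for $n_{m-1}\le N-1$ together with $(q)_{N-1}=N$, reduces to $Z_{m,N}(\zeta_N^M)$ in the limit $q\to\zeta_N^M$, while the right side is a finite sum over the $\alpha_k$. At a primitive $N$th root of unity the vanishing of $(q)_N$ forces this $\alpha$-sum to vanish at $q=\zeta_N^M$, so dividing by $(q)_N$ and taking the limit leaves a $0/0$ that, via the first-order expansion $1-q^N\sim N\varepsilon$ implicit in Section 2, evaluates to a constant multiple of $\sum_n n^2 C(n)\zeta_N^{Mn^2/(8(2m-1))}$. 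Matching this against the finite sum displayed above---splitting the summation range in half and using the symmetry $n\mapsto fN-n$ with the oddness of $C$, just as in the closing display of Section 2---yields the stated identity.

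The conceptual skeleton being identical to Theorem \ref{mainthm}, I expect no genuinely new difficulty. The real labour, and the place where a subtlety could hide, is running both completing-the-square computations carefully enough to pin down $C$, its modulus $f$, and the constant $c'$ so that the truncated-series evaluation in the last step lands \emph{exactly} on the radial limit from the step before---this is the analogue of the delicate range-splitting at the end of Section 2. A secondary point is to confirm that ``$N$ odd'' is precisely what keeps $(-q)_{n_1}$ from vanishing for $0\le n_1\le N-1$, legitimizing the limits taken in the last two steps.
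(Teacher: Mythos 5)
Your proposal is correct and follows essentially the same route as the paper: the Bailey pair you describe is exactly the one the paper takes from \cite[Proposition 5.1]{Lov} (with $\alpha_n=\frac{1-q^{2n+1}}{1-q}(-1)^nq^{mn^2+(m-1)n}$, so your constant $c$ is $1$), and your three steps correspond precisely to Propositions \ref{ex1prop1}, \ref{ex1prop2}, and \ref{ex1prop3}. The only cosmetic difference is that the paper performs the range-splitting symmetry argument inside the computation of the radial limit rather than in the final matching step, so that the two finite sums agree on sight.
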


This result will follow from the next three propositions.   We first prove that $\widetilde{\Psi}_m(q)$ is a false theta function.
\begin{proposition} \label{ex1prop1}
We have the identity
\begin{equation*}
\widetilde{\Psi}_m(q) = \frac{2m-1}{2} \sum\limits_{k=0}^{\infty}\chi_{4m-2}(k)\,q^{\frac{k^2}{8(2m-1)}} ,
\end{equation*}
where $\chi_{4m-2}(k)$ is the odd periodic function defined by
\begin{equation*}
\chi_{4m-2}(k)=\begin{cases}
1, & \mbox{if $k \equiv 2m-3 \pmod{4m-2}$},
\\
-1, & \mbox{if $k \equiv 2m+1 \pmod{4m-2}$}, \\
0, & \mbox{otherwise}.
\end{cases}
\end{equation*}
\end{proposition}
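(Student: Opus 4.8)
The proof will follow the same Bailey-pair strategy as the proof of Theorem \ref{mainthm}, but with Lemma \ref{keylemma2} in place of Lemma \ref{keycorollary}. The starting point is the Bailey pair relative to $(q,q)$ from \cite{Lov} in which $\beta$ carries the extra denominator $(-q)_{n_1}$. Written with parameter $m-1$, exactly as the proof of Theorem \ref{mainthm} used parameter $m-1$, it reads
\begin{equation*}
\alpha_n=\frac{(1-q^{2n+1})(-1)^nq^{(m-1)n^2+(m-2)n}}{1-q},
\end{equation*}
\begin{equation*}
\beta_n=\beta_{n_{m-1}}=\sum_{n_1,\dots,n_{m-2}\ge0}\frac{q^{n_1^2+\cdots+n_{m-2}^2+n_1+\cdots+n_{m-2}}}{(q)_{n_{m-1}}(-q)_{n_1}}\prod_{i=1}^{m-2}\begin{bmatrix}n_{i+1}\\n_i\end{bmatrix},
\end{equation*}
where the sum is empty for $m=2$, in which case $\beta_n=1/\bigl((q)_{n_1}(-q)_{n_1}\bigr)=1/(q^2;q^2)_{n_1}$. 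The first step is to confirm directly, or to cite from \cite{Lov}, that this is a genuine Bailey pair (the case $m=2$ is the classical pair attached to $\beta_n=1/(q^2;q^2)_n$).

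Next I would substitute this pair into Lemma \ref{keylemma2}. On the left the factor $(q)_n$ cancels the $(q)_{n_{m-1}}$ in the denominator of $\beta_n$, the factor $(-1)^nq^{\binom{n+1}{2}}$ supplies, with $n=n_{m-1}$, the matching factor in $\widetilde{\Psi}_m$, and summing over $n_{m-1}\ge0$ reproduces exactly the multisum defining $\widetilde{\Psi}_m(q)$, up to the prefactor $\tfrac{2m-1}{2}q^{(2m-3)^2/(8(2m-1))}$; on the right $(1-q)$ cancels the denominator of $\alpha_n$ and the two signs $(-1)^n$ cancel. Thus Lemma \ref{keylemma2} becomes
\begin{equation*}
\frac{2}{2m-1}\,q^{-\frac{(2m-3)^2}{8(2m-1)}}\,\widetilde{\Psi}_m(q)=\sum_{n\ge0}q^{\binom{n+1}{2}+(m-1)n^2+(m-2)n}\bigl(1-q^{2n+1}\bigr).
\end{equation*}

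It remains to recognise the right-hand side as a false theta function, a completing-the-square computation of the same kind as in Section 2. Using $\binom{n+1}{2}+(m-1)n^2+(m-2)n=\tfrac12\bigl((2m-1)n^2+(2m-3)n\bigr)$ and $(2m+1)^2-(2m-3)^2=8(2m-1)$, one gets
\begin{equation*}
\binom{n+1}{2}+(m-1)n^2+(m-2)n=\frac{\bigl((4m-2)n+2m-3\bigr)^2-(2m-3)^2}{8(2m-1)}
\end{equation*}
and
\begin{equation*}
\binom{n+1}{2}+(m-1)n^2+(m-2)n+(2n+1)=\frac{\bigl((4m-2)n+2m+1\bigr)^2-(2m-3)^2}{8(2m-1)},
\end{equation*}
so the right-hand side equals
\begin{equation*}
q^{-\frac{(2m-3)^2}{8(2m-1)}}\sum_{n\ge0}\Bigl(q^{\frac{((4m-2)n+2m-3)^2}{8(2m-1)}}-q^{\frac{((4m-2)n+2m+1)^2}{8(2m-1)}}\Bigr).
\end{equation*}
As $n$ ranges over the nonnegative integers the numbers $(4m-2)n+2m-3$ and $(4m-2)n+2m+1$ run through exactly the nonnegative integers $\equiv 2m-3$ and $\equiv 2m+1\pmod{4m-2}$, respectively (here one uses $m\ge2$), so this expression is $q^{-(2m-3)^2/(8(2m-1))}\sum_{k\ge0}\chi_{4m-2}(k)q^{k^2/(8(2m-1))}$. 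Multiplying through by $\tfrac{2m-1}{2}q^{(2m-3)^2/(8(2m-1))}$ yields Proposition \ref{ex1prop1}. Both sides converge for $|q|<1$ — on the left $1/(-q)_{n_1}$ is harmless against the $q^{n_1^2}$ factor, and on the right one has $q^{(m-1)n^2}$ — so this is an honest identity of analytic functions, with no regularisation needed.

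The only real work is the bookkeeping in the last paragraph: verifying that the quadratic data $(m-1)n^2+(m-2)n$ together with the twist $q^{2n+1}$ produce the residues $2m-3$ and $2m+1$ modulo $4m-2$ with the claimed signs, and, upstream of that, pinning down the exact form of $\alpha_n$ to quote from \cite{Lov}. Everything else is a verbatim specialisation of the Bailey-lemma machinery already used in Section 2, so I expect this elementary but slightly fiddly step to be the main obstacle.
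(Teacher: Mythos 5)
Your proposal is correct and follows essentially the same route as the paper: quote the Bailey pair \eqref{alphaex1}--\eqref{betaex1} with $m\mapsto m-1$, insert it into Lemma \ref{keylemma2}, and complete the square. The only difference is that you spell out the completing-the-square bookkeeping (which checks out, including the identity $(2m+1)^2-(2m-3)^2=8(2m-1)$ and the residue verification) that the paper leaves implicit.
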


\begin{proof}
We begin with the Bailey pair relative to $(q,q)$ (see \cite[Proposition 5.1]{Lov} or \cite[p. 373]{Wa1}), 
\begin{equation} \label{alphaex1}
\alpha_n = \frac{1-q^{2n+1}}{1-q}(-1)^nq^{mn^2 + (m-1)n}
\end{equation}
and
\begin{equation} \label{betaex1}
\beta_n = \beta_{n_{m}} = \sum_{n_1, \dots,n_{m-1} \geq 0} \frac{q^{n_1^2+\cdots+n_{m-1}^2+n_{1}+\cdots+n_{m-1}}}{(q)_{n_{m}}(-q)_{n_1}}   \prod_{i=1}^{m-1} \begin{bmatrix} n_{i+1} \\ n_i \end{bmatrix}.
\end{equation}
Here $m \geq 1$ and when $m=1$ we have
$$
\beta_n = \beta_{n_1} = \frac{1}{(q^2;q^2)_{n_1}}.
$$
Using \eqref{alphaex1} and \eqref{betaex1} with $m\mapsto m-1$ in Lemma \ref{keylemma2} we obtain
\begin{align*}
\widetilde{\Psi}_m(q) &= \frac{2m-1}{2}q^{\frac{(2m-3)^2}{8(2m-1)}}\sum_{n \geq 0} (1-q^{2n+1})q^{\binom{n+1}{2} + (m-1)n^2+ (m-2)n}
\end{align*}
and the result then follows after completing the square on the right-hand side.
\end{proof}

Next we employ Lemma \ref{valuesgeneral} to calculate the radial limits of $\widetilde{\Psi}_m(q)$ as $q$ approaches an odd root of unity.   

\begin{proposition} \label{ex1prop2}
For coprime integers $M$ and $N$ with $N$ odd and positive we have
\begin{equation*}
\lim_{q\rightarrow\zeta_N^M}\widetilde{\Psi}_m(q) = \dfrac{-1}{8(2m-1)N^2}\sum_{k=0}^{(4m-2)N}k^2\chi_{4m-2}(k)\zeta_N^{\frac{k^2}{8(2m-1)}M}.
\end{equation*}
\end{proposition}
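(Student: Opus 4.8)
The plan is to deduce this from Proposition \ref{ex1prop1} together with the asymptotic formula of Lemma \ref{valuesgeneral}. Write $D = 8(2m-1)$ and approach the root of unity radially by setting $q = \zeta_N^M e^{-t}$ with $t \searrow 0$. Proposition \ref{ex1prop1} then gives
\begin{equation*}
\widetilde{\Psi}_m(\zeta_N^M e^{-t}) = \frac{2m-1}{2}\sum_{k \geq 0}\chi_{4m-2}(k)\,\zeta_N^{Mk^2/D}\,e^{-k^2 t/D},
\end{equation*}
so the natural object to study is the twisted character $C(k) := \chi_{4m-2}(k)\,\zeta_N^{Mk^2/D}$.

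The first and main step is to check that $C$ is a periodic function with modulus $f := (4m-2)N$ and mean value zero. Since $2m-3$ and $2m+1$ are odd while $4m-2$ is even, $\chi_{4m-2}$ is supported on the odd integers, so only odd $k$ contribute. For odd $k$, using that $N$ is odd and that $D = 4(4m-2)$, a short computation shows that $M\bigl((k+f)^2-k^2\bigr)/D$ and $M\bigl((f-k)^2-k^2\bigr)/D$ are both integer multiples of $N$ — this is exactly where the oddness of $N$ is needed, since $k$, $N$ and $2m-1$ are all odd on the support of $\chi_{4m-2}$ — whence $\zeta_N^{M(k+f)^2/D} = \zeta_N^{M(f-k)^2/D} = \zeta_N^{Mk^2/D}$. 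Combining these with $\chi_{4m-2}(k+f) = \chi_{4m-2}(k)$ and $\chi_{4m-2}(f-k) = -\chi_{4m-2}(k)$ yields $C(k+f) = C(k)$ and $C(f-k) = -C(k)$; summing the latter over a full period, together with $C(0) = 0$, gives $\sum_{n=1}^{f}C(n) = 0$. I expect this modular bookkeeping to be the only delicate part of the argument; everything afterward is routine.

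Since $C(0)=0$, the displayed series equals $\frac{2m-1}{2}\sum_{k\geq 1}C(k)e^{-k^2 t/D}$, and Lemma \ref{valuesgeneral} applied with $t$ replaced by $t/D$ furnishes an asymptotic expansion as $t\searrow 0$ whose constant term is $L(0,C)$. Hence the radial limit exists and
\begin{equation*}
\lim_{q \to \zeta_N^M}\widetilde{\Psi}_m(q) = \frac{2m-1}{2}L(0,C) = -\frac{2m-1}{2}\sum_{n=1}^{f}C(n)\,B_1\!\left(\frac{n}{f}\right) = -\frac{2m-1}{2f}\sum_{n=1}^{f}n\,C(n),
\end{equation*}
where the last equality uses $B_1(x)=x-\tfrac12$ together with $\sum_{n=1}^f C(n)=0$.

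Finally I would recast $\sum_{n=1}^f nC(n)$ in the $k^2$-form of the statement. Putting $S_1 = \sum_{n=1}^f nC(n)$ and $S_2 = \sum_{n=0}^f n^2C(n)$, the substitution $n\mapsto f-n$ combined with $C(f-n)=-C(n)$, $\sum_{n=1}^f C(n)=0$, and $C(0)=C(f)=0$ gives $S_2=fS_1$. Substituting $S_1 = S_2/f$ and $f = 2(2m-1)N$ then yields
\begin{equation*}
\lim_{q \to \zeta_N^M}\widetilde{\Psi}_m(q) = -\frac{2m-1}{2f^2}\,S_2 = \frac{-1}{8(2m-1)N^2}\sum_{k=0}^{(4m-2)N}k^2\,\chi_{4m-2}(k)\,\zeta_N^{\frac{k^2}{8(2m-1)}M},
\end{equation*}
which is the claimed identity.
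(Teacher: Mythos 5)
Your proof is correct and follows essentially the same route as the paper: write the series with the twisted coefficient $C(k)=\chi_{4m-2}(k)\zeta_N^{Mk^2/(8(2m-1))}$, verify (using that $N$ and the support of $\chi_{4m-2}$ are odd) that $C$ is periodic with modulus $(4m-2)N$, odd under $k\mapsto (4m-2)N-k$, and of mean value zero, then apply Lemma \ref{valuesgeneral} and $B_1(x)=x-\tfrac12$. The only (cosmetic) difference is in the final bookkeeping: you convert $\sum nC(n)$ into $\sum n^2C(n)$ via the identity $S_2=fS_1$ from the full-range reflection, whereas the paper splits the sum into two half-ranges and reflects one of them; both rest on the same antisymmetry of $C$.
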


\begin{proof}
Regarding $\widetilde{\Psi}_m$ as a function of $\tau$ with $q = e^{2 \pi i \tau}$, we have 
\begin{align*}
\widetilde{\Psi}_m\left(\frac{M}{N}+i\frac{t}{2\pi}\right)&=\frac{2m-1}{2}\sum_{k=0}^{\infty}\chi_{4m-2}(k)e^{2\pi i \left(\frac{M}{N}+i\frac{t}{2\pi}\right) \frac{k^2}{8(2m-1)}}
\\
&=\frac{2m-1}{2}\sum_{k=0}^{\infty}C_{(4m-2)N}(k)e^{ \frac{-k^2}{8(2m-1)}t},
\end{align*}
where 
$$
C_{(4m-2)N}(k)=\chi_{4m-2}(k)e^{\frac{Mk^2}{2(4m-2)N}\pi i}.
$$
We have  
\begin{align*}
C_{(4m-2)N}(k + (4m-2)N) &= \chi_{4m-2}(k + (4m-2)N)e^{\frac{M\pi i(k^2+ 2kN(4m-2) + N^2(4m-2)^2)}{2(4m-2)N}} \\
&= \chi_{4m-2}(k) e^{\frac{M\pi i k^2}{2(4m-2)N}}e^{M\pi i(k + N(2m-1))},
\end{align*}
and using the fact that $N$ is odd together with the definition of $\chi_{4m-2}(k)$ this gives 
\begin{equation*} \label{Cprop1ex1}
C_{(4m-2)N}(k+(4m-2)N)=C_{(4m-2)N}(k).
\end{equation*}
Similarly, we have
\begin{equation} \label{Cprop2ex1}
C_{(4m-2)N}((4m-2)N-k)=-C_{(4m-2)N}(k).
\end{equation}
Note that by \eqref{Cprop2ex1} we have
\begin{equation} \label{Cprop3ex1}
\sum_{k = 1}^{(4m-2)N} C_{(4m-2)N}(k) = \sum_{k = 0}^{(4m-2)N} C_{(4m-2)N}(k) = 0.
\end{equation}

Now using Lemma \ref{valuesgeneral} we have the asymptotic expansion as $t\searrow0$, $$\widetilde{\Psi}_m\left(\frac{M}{N}+i\frac{t}{2\pi}\right) \sim \frac{2m-1}{2} \sum_{k=0}^{\infty}\dfrac{L(-2k,C_{(4m-2)N})}{k!}\left(-\dfrac{t}{8(2m-1)}\right)^k.$$
Using this together with $B_1(x) = x - \frac{1}{2}$ we compute the limiting value as follows,
\begin{align*}
\lim_{\tau \downarrow \frac{M}{N}} \widetilde{\Psi}_m (\tau)&=\frac{2m-1}{2}L(0,C_{(4m-2)N})
\\
&= -\frac{2m-1}{2}\sum_{k=1}^{(4m-2)N}C_{(4m-2)N}(k)B_1\left(\dfrac{k}{(4m-2)N}\right)
\\
&=- \frac{2m-1}{2}\sum_{k=1}^{(4m-2)N}C_{(4m-2)N}(k)\left(\frac{k}{(4m-2)N}\right)   \ \ \ \text{(by \eqref{Cprop3ex1})}
\\
&=- \sum_{k=0}^{(4m-2)N} C_{(4m-2)N}(k)\left(\dfrac{k}{4N}\right)
\\
&=- \sum_{k=0}^{(2m-1)N} C_{(4m-2)N}(k)\left(\dfrac{k}{4N}\right) 
\\
&\ \ \ \ \ \ \ \ - \sum_{k=0}^{(2m-1)N} C_{(4m-2)N}((4m-2)N - k)\left(\dfrac{(4m-2)N - k}{4N}\right)
\\
&=\sum_{k=0}^{(2m-1)N}C_{(4m-2)N}(k)\left(\frac{2m-1}{2}-\dfrac{k}{2N}\right) \ \ \ \text{(by \eqref{Cprop2ex1})}.
\end{align*}
Using the fact that
$$
\frac{2m-1}{2}-\dfrac{k}{2N}  = \frac{-1}{8(2m-1)N^2}\left(k^2 - ((4m-2)N - k)^2\right)
$$
we then obtain
\begin{align*}
\lim_{\tau \downarrow \frac{M}{N}} \widetilde{\Psi}_m (\tau) =\dfrac{-1}{8(2m-1)N^2}\Bigg(&\sum_{k=0}^{(2m-1)N}k^2C_{(4m-2)N}(k) \\
&-\sum_{k=0}^{(2m-1)N}((4m-2)N-k)^2C_{(4m-2)N}(k)\Bigg)
\\
=\dfrac{-1}{8(2m-1)N^2}\Bigg(&\sum_{k=0}^{(2m-1)N}k^2C_{(4m-2)N}(k) \\
&+\sum_{k=0}^{(2m-1)N}((4m-2)N-k)^2 C_{(4m-2)N}((4m-2)N - k)\Bigg)
\\
=\dfrac{-1}{8(2m-1)N^2}\Bigg(&\sum_{k=0}^{(2m-1)N}k^2 C_{(4m-2)N}(k) + \sum_{k=(2m-1)N}^{(4m-2)N}k^2 C_{(4m-2)N}(k)\Bigg)
\\
=\dfrac{-1}{8(2m-1)N^2}&\sum_{k=0}^{(4m-2)N}k^2\chi_{4m-2}(k)e^{\frac{k^2}{2(4m-2)N}M\pi i},
\end{align*}

which gives the result.
\end{proof}

Now we determine the value of $Z_{m,N}(q)$ at primitive odd $N$th roots of unity.

\begin{proposition} \label{ex1prop3}
For $q = \zeta_N^M$, a primitive odd $N$-th root of unity, we have
\begin{equation} \label{finiteeval2}
Z_{m,N}(q) =\frac{-1}{8(2m-1)N^2}\zeta_N^{\frac{-M(2m-3)^2}{8(2m-1)}}\sum\limits_{k=0}^{(4m-2)N}k^2\chi_{4m-2}(k)\,\zeta_N^{\frac{Mk^2}{8(2m-1)}}.
\end{equation}
\end{proposition}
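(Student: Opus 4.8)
The plan is to run the argument of the proof of Theorem~\ref{mainthm}, now with the Bailey pair \eqref{alphaex1}--\eqref{betaex1} in the role there assigned to the Bailey pair of \cite[Prop.~4.1]{Lov}. First I would take \eqref{alphaex1}--\eqref{betaex1} relative to $(q,q)$ with $m \mapsto m-1$ and substitute into Lemma~\ref{keycorollary} with $n = N-1$. Since $\alpha_n$ carries the factor $1/(1-q)$ and $(q^2)_{N-1} = (q)_N/(1-q)$, cancelling the common $1/(1-q)$ gives the exact analogue of \eqref{pair1inkeycorollary},
\begin{equation*}
\begin{aligned}
(q)_N \sum_{n_1,\dots,n_{m-1}=0}^{N-1} & \frac{(q^{1-N})_{n_{m-1}}(-1)^{n_{m-1}}q^{\binom{n_{m-1}+1}{2}+Nn_{m-1}+n_1^2+\cdots+n_{m-2}^2+n_1+\cdots+n_{m-2}}}{(q)_{n_{m-1}}(-q)_{n_1}} \\
&\times \prod_{i=1}^{m-2}\begin{bmatrix} n_{i+1} \\ n_i \end{bmatrix} \\
= &\sum_{k=0}^{N-1} \frac{(q^{1-N})_k}{(q^{1+N})_k}(1-q^{2k+1})q^{\binom{k+1}{2}+(m-1)k^2+(m-2)k+Nk}.
\end{aligned}
\end{equation*}

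When $q$ is a primitive $N$th root of unity the factor $(q)_N$ makes the left-hand side vanish, hence so does the right-hand side; and, running the completing-the-square computation from the proof of Proposition~\ref{ex1prop1} but with the summation index restricted to $0 \le k \le N-1$ (so that the resulting index runs through $0 \le \ell \le (4m-2)N$), the right-hand side at $q = \zeta_N^M$ equals $\zeta_N^{-M(2m-3)^2/(8(2m-1))}\sum_{k=0}^{(4m-2)N}\chi_{4m-2}(k)\zeta_N^{Mk^2/(8(2m-1))}$, which is $0$ by \eqref{Cprop3ex1}. I would then divide the displayed identity by $(q)_N$ and let $q \to \zeta_N^M$. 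On the left, since $N$ is odd we have $(-\zeta_N^M)_{n_1} \ne 0$ for $0 \le n_1 \le N-1$, while $(q^{1-N})_{n_{m-1}}/(q)_{n_{m-1}} \to 1$ and $q^{Nn_{m-1}} \to 1$, so the left-hand side tends to $Z_{m,N}(\zeta_N^M)$. On the right, writing $(q)_N = (q)_{N-1}(1-q^N)$ and using $(q)_{N-1} = N$ at a primitive $N$th root of unity, one is reduced to evaluating $\frac{1}{N}\lim_{q\to\zeta_N^M}\frac{1}{1-q^N}\zeta_N^{-M(2m-3)^2/(8(2m-1))}\sum_{k=0}^{(4m-2)N}\chi_{4m-2}(k)q^{k^2/(8(2m-1))}$; l'Hôpital's rule, together with $\sum_{k=0}^{(4m-2)N}\chi_{4m-2}(k)\zeta_N^{Mk^2/(8(2m-1))}=0$, turns this into $\frac{-1}{8(2m-1)N^2}\zeta_N^{-M(2m-3)^2/(8(2m-1))}\sum_{k=0}^{(4m-2)N}k^2\chi_{4m-2}(k)\zeta_N^{Mk^2/(8(2m-1))}$, which is \eqref{finiteeval2}. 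This is precisely the passage to \eqref{finiteeval1} in the proof of Theorem~\ref{mainthm}, with $4m$ replaced by $8(2m-1)$ and $\chi_{2m}^{(a)}$ by $\chi_{4m-2}$.

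The step I expect to demand the most care is the $0/0$ limit just described: one must check that, after dividing by $1-q^N$, the summand $\frac{(q^{1-N})_k}{(q^{1+N})_k}(1-q^{2k+1})q^{\binom{k+1}{2}+(m-1)k^2+(m-2)k+Nk}$ may be replaced by its value $(1-q^{2k+1})q^{\binom{k+1}{2}+(m-1)k^2+(m-2)k}$ at $q^N = 1$ without changing the limit at $q = \zeta_N^M$. As in the proof of Theorem~\ref{mainthm}, this uses that $\tfrac{(q^{1-N})_k}{(q^{1+N})_k}q^{Nk}-1$ is divisible by $1-q^N$, together with the vanishing of the right-hand side of the displayed identity at $q = \zeta_N^M$. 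The only genuinely new bookkeeping compared with Theorem~\ref{mainthm} is tracking the factor $(-q)_{n_1}$ in the denominator, and it is exactly this factor that necessitates the restriction to odd $N$.
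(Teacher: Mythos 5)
Your argument is correct and is essentially the paper's own proof: the same Bailey pair \eqref{alphaex1}--\eqref{betaex1} is inserted into Lemma \ref{keycorollary} with $m\mapsto m-1$ and $n=N-1$, the left-hand side is identified with $(q)_N Z_{m,N}(q)$ in the limit, and the right-hand side is evaluated at $q\to\zeta_N^M$ by completing the square and resolving the resulting $0/0$ quotient via l'H\^opital, using $(q)_{N-1}=N$. The only differences are presentational --- the paper divides by $(q^2)_{N-1}$ at the outset rather than carrying the $(q)_N$ prefactor --- and you are, if anything, more explicit than the paper in flagging the justification needed to replace $(q^{1-N})_k\,q^{Nk}/(q^{1+N})_k$ by $1$ inside the limit.
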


\begin{proof}
Inserting the Bailey pair from \eqref{alphaex1} and \eqref{betaex1} into Lemma \ref{keycorollary} with $m\mapsto m-1$ and $n=N-1$, we obtain
\\
\begin{align*}
\sum_{n_1, \dots , n_{m-1} = 0}^{N-1}& \frac{(q^{1-N})_{n_{m-1}}(-1)^{n_{m-1}}q^{Nn_{m-1}+\binom{n_{m-1}+1}{2}+n_1^2+\cdots+n_{m-2}^2+n_{1}+\cdots+n_{m-2}}}{(q)_{n_{m-1}}(-q)_{n_1}}\prod_{i=1}^{m-2} \begin{bmatrix} n_{i+1} \\ n_i \end{bmatrix}
\\
&=\sum_{k=0}^{N-1}\frac{(q^{1-N})_k(1-q^{2k+1})q^{-\binom{k}{2}+(N-1)k+mk^2+(m-1)k}}{(q)_{N+k}}.
\end{align*}
Then taking $q$ to be a primitive odd $N$-th root of unity $\zeta_N^M$, we get
\begin{align*}
Z_{m,N}(q) 
&=\lim_{q \rightarrow \zeta_N^M}\sum_{k=0}^{N-1}\frac{(q^{1-N})_k(1-q^{2k+1})\,q^{-\binom{k}{2} +(N-1)k+mk^2+(m-1)k}}{(q)_{N-1}(1-q^N)(q^{N+1})_k}
\\
&=\frac{1}{N}\lim_{q \rightarrow \zeta_N^M}\frac{\sum\limits_{k=0}^{N-1}(1-q^{2k+1})\,q^{-\binom{k}{2}+mk^2+mk-2k}}{(1-q^N)}
\\
&=\frac{1}{N}\lim_{q \rightarrow \zeta_N^M}\frac{q^{\frac{-(2m-3)^2}{8(2m-1)}}\sum\limits_{k=0}^{(4m-2)N}\chi_{4m-2}(k)\,q^{\frac{k^2}{8(2m-1)}}}{(1-q^N)}
\\
&=\frac{-1}{8(2m-1)N^2}\zeta_N^{\frac{-M(2m-3)^2}{8(2m-1)}}\sum\limits_{k=0}^{(4m-2)N}k^2\chi_{4m-2}(k)\,\zeta_N^{\frac{Mk^2}{8(2m-1)}}.
\end{align*}
This completes the proof.
\end{proof}

Comparing Propositions \ref{ex1prop2} and \ref{ex1prop3} gives Theorem \ref{ex1theorem}.

\subsection{Example 2} We follow the same steps as in the previous example, this time with the function
\begin{equation*}
\begin{aligned}
\widetilde{\Gamma}_m(q) = (m-1)q^{\frac{(2m-3)^2}{8(m-1)}} \sum_{n_1, \dots, n_{m-1} \geq 0} & \frac{(-1)^{n_{m-1}}q^{2\binom{n_{m-1}+1}{2}+2n_1^2+ 2n_1 + \cdots+2n_{m-2}^2+2n_{m-2}}(q;q^2)_{n_1}}{(-q)_{2n_1+1}} \\
&\times \prod_{i=1}^{m-2} \begin{bmatrix} n_{i+1} \\ n_i \end{bmatrix}_{q^2}
\end{aligned}
\end{equation*}
and the truncated version
\begin{equation*}
U_{m,N}(q) = \sum_{n_1, \dots , n_{m-1} = 0}^{N-1} \frac{(-1)^{n_{m-1}}q^{2\binom{n_{m-1}+1}{2}+2n_1^2+ 2n_1 + \cdots+2n_{m-2}^2 +2n_{m-2}}(q;q^2)_{n_1}}{(-q)_{2n_1+1}}\prod_{i=1}^{m-2} \begin{bmatrix} n_{i+1} \\ n_i \end{bmatrix}_{q^2}.
\end{equation*}

As usual, these are valid for $m \geq 2$.    We will show that $\widetilde{\Gamma}_m(q)$ is a false theta function and that its limiting values at odd roots of unity are computed using $U_{m,N}(q)$.

\begin{theorem} \label{ex2theorem}
Let $\zeta_N^M$ be a primitive odd $N$th root of unity.    Then 
\begin{equation*}
\lim_{q\rightarrow\zeta_N^M}\widetilde{\Gamma}_m(q) = \zeta_N^{\frac{M(2m-3)^2}{8(m-1)}} U_{m,N}(\zeta_N^M).
\end{equation*}
\end{theorem}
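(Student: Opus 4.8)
The strategy is to follow the blueprint of Theorem~\ref{ex1theorem}: I would deduce Theorem~\ref{ex2theorem} from three propositions, the exact analogues of Propositions~\ref{ex1prop1}, \ref{ex1prop2}, and~\ref{ex1prop3}. The first (analogue of Proposition~\ref{ex1prop1}) asserts that $\widetilde{\Gamma}_m(q)$ is a false theta function, namely
\begin{equation*}
\widetilde{\Gamma}_m(q) = (m-1)\sum_{k \geq 0}\chi_{4m-4}(k)\,q^{\frac{k^2}{8(m-1)}},
\end{equation*}
where $\chi_{4m-4}$ is the odd periodic function of modulus $4m-4$ with $\chi_{4m-4}(k)=1$ for $k\equiv 2m-3$ and $\chi_{4m-4}(k)=-1$ for $k\equiv 2m-1 \pmod{4m-4}$. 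To prove this I would take the Bailey pair from \cite{Lov} whose $\beta_n$ carries the factor $(q;q^2)_{n_1}/(-q)_{2n_1+1}$ and which specializes to the multisum in $\widetilde{\Gamma}_m$, feed it into Lemma~\ref{keylemma2} with $q$ replaced by $q^2$ and $m\mapsto m-1$, and observe that this collapses the $(m-1)$-fold multisum to the single sum $\sum_{n\geq 0}(1-q^{2n+1})q^{2(m-1)n^2+(2m-3)n}$; completing the square — the two completed squares are $((4m-4)n+2m-3)^2$ and $((4m-4)n+2m-1)^2$, each over $8(m-1)$, and each differing from the exponent by the constant $\frac{(2m-3)^2}{8(m-1)}$ which cancels the prefactor — gives the claimed identity.

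For the second proposition (analogue of Proposition~\ref{ex1prop2}) I would compute, for $N$ odd, the radial limit of $\widetilde{\Gamma}_m$ via Lemma~\ref{valuesgeneral}. Writing $q=e^{2\pi i(M/N+it/2\pi)}$ turns the series into $(m-1)\sum_{k\geq 0}C_{(4m-4)N}(k)e^{-k^2t/(8(m-1))}$ with $C_{(4m-4)N}(k)=\chi_{4m-4}(k)\,\zeta_N^{Mk^2/(8(m-1))}$; a short computation, modeled on \eqref{Cprop2ex1} and~\eqref{Cprop3ex1}, shows that $C_{(4m-4)N}$ has period $(4m-4)N$, satisfies $C_{(4m-4)N}((4m-4)N-k)=-C_{(4m-4)N}(k)$, and therefore has mean value $0$. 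Then Lemma~\ref{valuesgeneral} with $B_1(x)=x-\tfrac12$ gives $\lim_{q\to\zeta_N^M}\widetilde{\Gamma}_m(q)=(m-1)L(0,C_{(4m-4)N})$, and the same telescoping as in Proposition~\ref{ex1prop2} — fold the sum at its midpoint, apply the antisymmetry, and use $\tfrac{(4m-4)N-2k}{4N}=\tfrac{-1}{16(m-1)N^2}\big(k^2-((4m-4)N-k)^2\big)$ — yields
\begin{equation*}
\lim_{q\to\zeta_N^M}\widetilde{\Gamma}_m(q)=\frac{-1}{16(m-1)N^2}\sum_{k=0}^{(4m-4)N}k^2\chi_{4m-4}(k)\,\zeta_N^{\frac{Mk^2}{8(m-1)}}.
\end{equation*}

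For the third proposition (analogue of Proposition~\ref{ex1prop3}) I would insert the same Bailey pair into Lemma~\ref{keycorollary}, once more with $q$ replaced by $q^2$ and $m\mapsto m-1$, and set $n=N-1$. Because $N$ is odd, $q^2=\zeta_N^{2M}$ is again a primitive $N$th root of unity, so $(q^2;q^2)_{N-1}=N$ and the Pochhammer ratios occurring tend to $1$; the left side of the resulting identity then tends to $U_{m,N}(\zeta_N^M)$, while the right side, after completing the square as above, becomes
\begin{equation*}
\frac{1}{N}\lim_{q\to\zeta_N^M}\frac{q^{-\frac{(2m-3)^2}{8(m-1)}}\sum_{k=0}^{(4m-4)N}\chi_{4m-4}(k)\,q^{\frac{k^2}{8(m-1)}}}{1-q^{2N}}.
\end{equation*}
The numerator sum vanishes at $\zeta_N^M$, exactly as in the proof of Hikami's conjecture (here it is $(q^2;q^2)_N$ that vanishes), so the limit is of the form $0/0$; L'H\^opital's rule — with $1-q^{2N}$ in place of $1-q^N$ supplying the factor of $2$ needed to match the previous display — gives
\begin{equation*}
U_{m,N}(\zeta_N^M)=\frac{-1}{16(m-1)N^2}\zeta_N^{-\frac{M(2m-3)^2}{8(m-1)}}\sum_{k=0}^{(4m-4)N}k^2\chi_{4m-4}(k)\,\zeta_N^{\frac{Mk^2}{8(m-1)}}.
\end{equation*}
Multiplying by $\zeta_N^{M(2m-3)^2/(8(m-1))}$ and comparing the two displays proves Theorem~\ref{ex2theorem}.

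The main obstacle is the first step: identifying the correct Bailey pair in \cite{Lov} and carrying out the square-completion that collapses the multisum. After that, the second and third steps are essentially verbatim adaptations of the arguments for Example~1; the only real bookkeeping subtleties are the $q\mapsto q^2$ substitution in Lemmas~\ref{keylemma2} and~\ref{keycorollary} (which is also responsible for the $1-q^{2N}$ rather than $1-q^N$ above) and the observation that the odd-root-of-unity hypothesis is precisely what keeps the factors $(-q)_{2n_1+1}$ in the denominators away from zero and makes $q^2$ primitive.
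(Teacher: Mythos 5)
Your proposal is correct and follows essentially the same route as the paper: the same Bailey pair from \cite{Lov} (the one with the $(q;q^2)_{n_1}/(-q)_{2n_1+1}$ factor) fed into Lemma \ref{keylemma2} with $q\mapsto q^2$ to get the false theta identity, the same application of Lemma \ref{valuesgeneral} for the radial limit, and the same use of Lemma \ref{keycorollary} at $n=N-1$ with the $0/0$ evaluation against $1-q^{2N}$. All the bookkeeping you flag (the completed squares, the coefficient $-1/(16(m-1)N^2)$, and the role of $N$ odd in making $q^2$ primitive and $(q^2;q^2)_{N-1}=N$) matches the paper's Propositions \ref{ex2prop2} and \ref{ex2prop3}.
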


This theorem will follow from the next three propositions.   We first show that $\widetilde{\Gamma}_m(q)$ is a false theta function.

\begin{proposition}
We have the identity
\begin{equation*}
\widetilde{\Gamma}_m(q)= (m-1)\sum\limits_{k=0}^{\infty}\chi_{4(m-1)}(k)\,q^{\frac{k^2}{8(m-1)}},
\end{equation*}
where
$$\chi_{4(m-1)}(k)=\begin{cases}
1, & \mbox{if $k \equiv 2m-3 \pmod{4(m-1)}$},
\\
-1, & \mbox{if $k \equiv 2m-1 \pmod{4(m-1)}$}, \\
0, & \mbox{otherwise}.
\end{cases}$$
\end{proposition}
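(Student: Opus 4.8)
The plan is to follow exactly the template established in Proposition \ref{ex1prop1}: start from a known Bailey pair relative to $(q,q)$, feed it into Lemma \ref{keylemma2}, and then complete the square to recognize the resulting one-dimensional sum as the claimed false theta function. The key is identifying the right Bailey pair. Given the shape of $\widetilde{\Gamma}_m(q)$ — the base-$q^2$ Gaussian polynomials, the factor $(q;q^2)_{n_1}/(-q)_{2n_1+1}$, and the quadratic exponent $2\binom{n_{m-1}+1}{2} + 2n_1^2 + 2n_1 + \cdots$ — I expect the relevant pair is one of the mod-2 Bailey pairs in \cite{Lov} (companion to the one used in Example 1), of the form $\alpha_n = \frac{1-q^{2n+1}}{1-q}(-1)^n q^{(2m-1)n^2 + (2m-2)n}$ or similar, paired with a $(q^2,q^2)$-iterated $\beta_n$ whose $m=1$ seed is $\beta_{n_1} = \frac{(q;q^2)_{n_1}}{(q^2;q^2)_{2n_1+1}}$ or $\frac{1}{(-q)_{2n_1+1}}$ times an appropriate power. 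I would locate the precise statement in \cite[Section 5]{Lov}, cite it, and record it with $m \mapsto m-1$.

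First I would state the Bailey pair $(\alpha_n,\beta_n)$ relative to $(q,q)$ being used, together with its $m=1$ base case. Second, I would substitute it into Lemma \ref{keylemma2}, i.e. compute $\sum_{n\geq 0}(q)_n(-1)^n q^{\binom{n+1}{2}}\beta_n = (1-q)\sum_{n\geq 0}(-1)^n q^{\binom{n+1}{2}}\alpha_n$. On the left side, the telescoping of $(q)_n$ against the $(q)_{n_{m-1}}$ in the denominator of $\beta_n$ (after the change $m\mapsto m-1$) should collapse things to precisely the $(m-1)$-fold sum defining $\widetilde{\Gamma}_m(q)$, up to the prefactor $(m-1)q^{(2m-3)^2/8(m-1)}$. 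On the right side, $(1-q)\sum_{n\geq 0}(-1)^n q^{\binom{n+1}{2}}\alpha_n = \sum_{n\geq 0}(1-q^{2n+1})q^{\binom{n+1}{2} + c_1 n^2 + c_2 n}$ for the appropriate constants $c_1, c_2$ coming from the $\alpha_n$ with $m\mapsto m-1$. Third, I would complete the square: write $\binom{n+1}{2} + c_1 n^2 + c_2 n = \frac{(\text{linear in }n)^2 - (2m-3)^2}{8(m-1)}$, so that the prefactor $q^{(2m-3)^2/8(m-1)}$ cancels the constant term, and split the $1$ and $-q^{2n+1}$ contributions into two residue classes mod $4(m-1)$, yielding $\sum_{n\geq 0}$ over $n$ reindexed as $k$ with the sign pattern encoded by $\chi_{4(m-1)}$. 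Checking that the two residue classes are $2m-3$ and $2m-1 \pmod{4(m-1)}$ with signs $+1$ and $-1$ is the bookkeeping that pins down the statement.

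The main obstacle I anticipate is not conceptual but locating and correctly transcribing the exact Bailey pair from \cite{Lov} — in particular getting the exponents in $\alpha_n$ right so that after $m \mapsto m-1$ and completing the square the modulus comes out to $4(m-1)$ and the residues to $2m-3, 2m-1$, and making sure the base-$q^2$ structure of the $\beta_n$ iteration matches the $\begin{bmatrix} n_{i+1} \\ n_i \end{bmatrix}_{q^2}$ and the $(q;q^2)_{n_1}/(-q)_{2n_1+1}$ factor in $\widetilde{\Gamma}_m(q)$ exactly. Since the analogous verification was carried out in Proposition \ref{ex1prop1}, and since this $\beta_n$ differs only by working with the dilated base $q^2$ in the iteration together with a modified seed, the computation should go through with the same mechanism; I would present the Bailey pair, invoke Lemma \ref{keylemma2}, and then say ``the result follows after completing the square on the right-hand side,'' mirroring the proof of Proposition \ref{ex1prop1}.
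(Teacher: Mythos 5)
Your proposal follows the paper's proof exactly: cite the relevant multisum Bailey pair from \cite[Proposition 5.3]{Lov}, shift $m\mapsto m-1$, insert it into Lemma \ref{keylemma2}, and complete the square; the $\alpha_n$ you guessed, $(-1)^nq^{(2m-1)n^2+(2m-2)n}(1-q^{2n+1})$ up to normalization, is the correct one. The single point to fix is that this pair is relative to $(q^2,q^2)$, not $(q,q)$, so Lemma \ref{keylemma2} must be applied with $q$ replaced by $q^2$ throughout (consistent with your own remark about the dilated base); this is what produces the $2\binom{n_{m-1}+1}{2}$ exponent and the $(q^2;q^2)_n$ factors that telescope against the $q^2$-binomial coefficients, after which the right-hand side becomes $\sum_{n\ge 0}(1-q^{2n+1})q^{(2m-2)n^2+(2m-3)n}$ and the square-completion lands on the residues $2m-3$ and $2m-1$ modulo $4(m-1)$ as you describe.
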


\begin{proof}
We use the Bailey pair relative to $(q^2,q^2)$ \cite[Proposition 5.3]{Lov},
\begin{equation} \label{alphaex2}
\alpha_n = \frac{(-1)^nq^{(2m-1)n^2 + (2m-2)n}(1-q^{2n+1})}{1-q^2}
\end{equation}
and
\begin{equation} \label{betaex2}
\beta_n = \beta_{n_m} = \sum_{n_1, \dots, n_{m-1} \geq 0} \frac{q^{2n_1^2+2n_1 + \cdots + 2n_{m-1}^2 + 2n_{m-1}}(q;q^2)_{n_1}}{(q^2;q^2)_{n_m}(-q)_{2n_1 + 1}} \prod_{i=1}^{m-1} \begin{bmatrix} n_{i+1} \\ n_i \end{bmatrix}_{q^2}.
\end{equation}
Here $m \geq 1$ and when $m=1$ we have 
$$
\beta_n = \beta_{n_1} = \frac{(q;q^2)_{n_1}}{(q^2;q^2)_{n_1}(-q)_{2n_1+1}}.
$$
Inserting this Bailey pair with $m\mapsto m-1$ into Lemma \ref{keylemma2} (remembering to replace $q$ by $q^2$ throughout) we obtain
\begin{equation*}
\widetilde{\Gamma}_m(q) = (m-1)q^{\frac{(2m-3)^2}{8(m-1)}} \sum_{n \geq 0} q^{(2m-2)n^2 + (2m-3)n}(1-q^{2n+1}),
\end{equation*}
and the result then follows after completing the square on the right-hand side.
\end{proof}

Next we use Lemma \ref{valuesgeneral} to compute $\widetilde{\Gamma}_m(q)$ as $q$ approaches an odd root of unity.

\begin{proposition} \label{ex2prop2}
For coprime integers $M$ and $N$ with $N$ odd and positive we have
\begin{equation*}
\lim_{q\rightarrow\zeta_N^M}\widetilde{\Gamma}_m (q)= \dfrac{-1}{16(m-1)N^2}\sum_{k=0}^{(4m-4)N}k^2\chi_{4(m-1)}(k)\zeta_N^{\frac{k^2}{8(m-1)}M}.
\end{equation*}
\end{proposition}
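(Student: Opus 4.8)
The plan is to mirror, step for step, the proof of Proposition~\ref{ex1prop2}, with the periodic coefficient now built from $\chi_{4(m-1)}$ rather than $\chi_{4m-2}$. First I would regard $\widetilde{\Gamma}_m$ as a function of $\tau$ via $q=e^{2\pi i\tau}$ and substitute $\tau=\frac{M}{N}+\frac{it}{2\pi}$ into the false theta representation established in the preceding proposition, obtaining
\[
\widetilde{\Gamma}_m\!\left(\tfrac{M}{N}+\tfrac{it}{2\pi}\right)=(m-1)\sum_{k\ge 0}C_{(4m-4)N}(k)\,e^{-\frac{k^2}{8(m-1)}t},
\qquad
C_{(4m-4)N}(k):=\chi_{4(m-1)}(k)\,e^{\frac{Mk^2}{4(m-1)N}\pi i}.
\]
Next I would record the two symmetries of this coefficient. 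Periodicity $C_{(4m-4)N}(k+(4m-4)N)=C_{(4m-4)N}(k)$ is immediate, since $\chi_{4(m-1)}$ has period $4(m-1)\mid(4m-4)N$ and the exponential picks up the factor $e^{2\pi i M(k+2(m-1)N)}=1$; the reflection $C_{(4m-4)N}((4m-4)N-k)=-C_{(4m-4)N}(k)$ follows from the oddness of $\chi_{4(m-1)}$ and the analogous exponential computation. Hence $C_{(4m-4)N}$ is a periodic function of modulus $f=(4m-4)N$ with mean value $0$ and $C_{(4m-4)N}(0)=0$, so Lemma~\ref{valuesgeneral} applies.

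Applying Lemma~\ref{valuesgeneral} with $f=(4m-4)N$ produces an asymptotic expansion in $t$ whose constant term gives
\[
\lim_{\tau\downarrow \frac{M}{N}}\widetilde{\Gamma}_m(\tau)=(m-1)\,L\bigl(0,C_{(4m-4)N}\bigr),
\]
and $B_1(x)=x-\tfrac12$ together with the vanishing mean value yields $L(0,C_{(4m-4)N})=-\frac{1}{(4m-4)N}\sum_{k=1}^{(4m-4)N}k\,C_{(4m-4)N}(k)$, so the limit equals $-\frac{1}{4N}\sum_{k=0}^{(4m-4)N}k\,C_{(4m-4)N}(k)$. Finally I would split this sum at the midpoint $(2m-2)N$, fold the two halves together using the reflection identity, insert the elementary identity
\[
(m-1)-\frac{k}{2N}=\frac{-1}{16(m-1)N^2}\bigl(k^2-((4m-4)N-k)^2\bigr),
\]
and then unfold once more (again via the reflection) to reassemble a single sum over $0\le k\le(4m-4)N$; writing $e^{\frac{Mk^2}{4(m-1)N}\pi i}=\zeta_N^{Mk^2/(8(m-1))}$ this is precisely $\frac{-1}{16(m-1)N^2}\sum_{k=0}^{(4m-4)N}k^2\chi_{4(m-1)}(k)\zeta_N^{\frac{k^2}{8(m-1)}M}$.

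I do not expect a real obstacle here, since the computation is a direct transcription of the $\widetilde{\Psi}_m$ case; the only point requiring a bit of care is tracking the modulus and checking that $C_{(4m-4)N}((2m-2)N)=0$ (which follows from the reflection identity at $k=(2m-2)N$), so that the split-and-fold manipulation does not double count the midpoint. I also note that, in contrast to Proposition~\ref{ex1prop2}, the hypothesis that $N$ be odd is not actually needed in this proposition, because $4(m-1)$ is divisible by $4$; it is retained only for consistency with Theorem~\ref{ex2theorem}.
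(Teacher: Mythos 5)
Your proposal is correct and is essentially identical to the paper's own proof: the same substitution $q=e^{2\pi i\tau}$, the same periodic coefficient $C_{4(m-1)N}(k)=\chi_{4(m-1)}(k)e^{\frac{Mk^2}{4(m-1)N}\pi i}$ with its periodicity and reflection antisymmetry, the application of Lemma~\ref{valuesgeneral} with $B_1(x)=x-\tfrac12$, and the split-fold-unfold manipulation via $(m-1)-\frac{k}{2N}=\frac{-1}{16(m-1)N^2}\bigl(k^2-((4m-4)N-k)^2\bigr)$. Your side observation that oddness of $N$ is not actually used in this particular proposition (only in the companion evaluation of $U_{m,N}$) is accurate but does not change the argument.
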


\begin{proof}
Regarding $\widetilde{\Gamma}_m$ as a function of $\tau$ with $q = e^{2 \pi i \tau}$ we have

\begin{align*}
\widetilde{\Gamma}_m\left(\frac{M}{N}+i\frac{t}{2\pi}\right)&=(m-1)\sum_{k=0}^{\infty}\chi_{4(m-1)}(k)e^{2\pi i \left(\frac{M}{N}+i\frac{t}{2\pi}\right) \frac{k^2}{8(m-1)}}
\\
&=(m-1)\sum_{k=0}^{\infty}C_{4(m-1)N}(k)e^{ \frac{-k^2}{8(m-1)}t},
\end{align*}
where 
$$
C_{4(m-1)N}(k)=\chi_{4(m-1)}(k)e^{\frac{Mk^2}{4(m-1)N}\pi i}.
$$
We note that 
\begin{equation*} \label{Cprop1ex2}
C_{4(m-1)N}(k+4(m-1)N)=C_{4(m-1)N}(k)
\end{equation*} 
and 
\begin{equation} \label{Cprop2ex2}
C_{4(m-1)N}(4(m-1)N-k)=-C_{4(m-1)N}(k).
\end{equation}
Then using Lemma \ref{valuesgeneral} we have, asymptotically as $t \searrow 0$,

$$\widetilde{\Gamma}_m\left(\frac{M}{N}+i\frac{t}{2\pi}\right) \sim (m-1)\sum_{k=0}^{\infty}\dfrac{L(-2k,C_{4(m-1)N})}{k!}\left(-\dfrac{t}{8(m-1)}\right)^k.$$
Using this and keeping in mind \eqref{Cprop2ex2} we compute the limiting value
\begin{small}
\begin{align*}
\lim_{\tau \downarrow \frac{M}{N}} \widetilde{\Gamma}_m(\tau)&=(m-1)L(0,C_{4(m-1)N})
\\
&= - (m-1)\sum_{k=1}^{4(m-1)N}C_{4(m-1)N}(k)B_1\left(\dfrac{k}{4(m-1)N}\right)
\\
&=(m-1)\sum_{k=1}^{4(m-1)N}C_{4(m-1)N}(k)\left(\frac{1}{2} - \dfrac{k}{4(m-1)N}\right)
\\
&=(m-1)\sum_{k=0}^{2(m-1)N}C_{4(m-1)N}(k)\left(1-\dfrac{k}{2(m-1)N}\right)
\\
&=\sum_{k=0}^{2(m-1)N}C_{4(m-1)N}(k)\left((m-1)-\dfrac{k}{2N}\right)
\\
&=\dfrac{-1}{16(m-1)N^2}\left(\sum_{k=0}^{2(m-1)N}k^2C_{4(m-1)N}(k)-\sum_{k=0}^{2(m-1)N}(4(m-1)N-k)^2C_{4(m-1)N}(k)\right)
\\
&=\dfrac{-1}{16(m-1)N^2}\Bigg(\sum_{k=0}^{2(m-1)N}k^2C_{4(m-1)N}(k) \\
&\ \ \ \ \ \ \ \ \ \ \ \ \ \ \ \ \ \ \ \ \ \ \ \ \ \ \ \ \ \ +\sum_{k=0}^{2(m-1)N}(4(m-1)N-k)^2 C_{4(m-1)N}(4(m-1)N - k)\Bigg)
\\
&=\dfrac{-1}{16(m-1)N^2}\left(\sum_{k=0}^{2(m-1)N}k^2C_{4(m-1)N}(k)+\sum_{k=2(m-1)}^{4(m-1)N}k^2C_{4(m-1)N}(k)\right)
\\
&=\dfrac{-1}{16(m-1)N^2}\sum_{k=0}^{(4m-4)N}k^2\chi_{4(m-1)}(k)e^{\frac{k^2}{4(m-1)N}M\pi i},
\end{align*}
\end{small}
and the result follows.
\end{proof}

Finally, we compute the values of the rational function $U_{m,N}(q)$ when $q$ is a primitive odd $N$th root of unity.

\begin{proposition} \label{ex2prop3}
 For $q = \zeta_N^M$, a primitive odd $N$-th root of unity, we have
\begin{equation} \label{finiteeval3}
U_{m,N}(q) =\frac{-1}{16(m-1)N^2}\zeta_N^{\frac{-M(2m-3)^2}{8(m-1)}}\sum\limits_{k=0}^{(4m-4)N}k^2\chi_{4(m-1)}(k)\,\zeta_N^{\frac{Mk^2}{8(m-1)}}.
\end{equation}
\end{proposition}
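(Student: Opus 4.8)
The argument mirrors the proof of Proposition \ref{ex1prop3}, the essential difference being that the Bailey pair \eqref{alphaex2}--\eqref{betaex2} is relative to $(q^2,q^2)$, so throughout I would use Lemma \ref{keycorollary} with $q$ replaced by $q^2$. First I would substitute \eqref{alphaex2} and \eqref{betaex2} with $m\mapsto m-1$ into this version of Lemma \ref{keycorollary} with $n=N-1$ and divide through by the outer factor $(q^4;q^2)_{N-1}=(q^2;q^2)_N/(1-q^2)$. Since $\alpha_k$ and this outer factor both carry a $1/(1-q^2)$, the result is the identity, valid for $|q|<1$,
\[
\sum_{n_1,\dots,n_{m-1}=0}^{N-1}\frac{(q^{2-2N};q^2)_{n_{m-1}}\,q^{2Nn_{m-1}}}{(q^2;q^2)_{n_{m-1}}}\cdot(\text{summand of }U_{m,N})=\sum_{k=0}^{N-1}\frac{(q^{2-2N};q^2)_k(1-q^{2k+1})}{(q^2;q^2)_{N+k}}\,q^{2(m-1)k^2+(2m-3)k+2Nk},
\]
whose left side is a multisum agreeing with $U_{m,N}(q)$ once the spurious prefactor is removed.

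Next I would set $q=\zeta_N^M$. Because $N$ is odd, $q^2$ is again a primitive $N$th root of unity, so $(q^2;q^2)_j\neq0$ for $0\le j\le N-1$, $(q^2;q^2)_{N-1}=N$, and $q^{2N}=1$, which makes $q^{2-2N}=q^2$ and $q^{2Nn_{m-1}}=q^{2Nk}=1$. Hence the left-hand side of the displayed identity converges termwise to $U_{m,N}(\zeta_N^M)$. On the right, writing $(q^2;q^2)_{N+k}=(q^2;q^2)_{N-1}(1-q^{2N})(q^{2N+2};q^2)_k$ and observing that $(q^{2-2N};q^2)_k\,q^{2Nk}/(q^{2N+2};q^2)_k\to1$, one obtains
\[
U_{m,N}(\zeta_N^M)=\frac1N\lim_{q\to\zeta_N^M}\frac{1}{1-q^{2N}}\sum_{k=0}^{N-1}(1-q^{2k+1})\,q^{2(m-1)k^2+(2m-3)k}.
\]
Completing the square in the two sums $\sum_k q^{2(m-1)k^2+(2m-3)k}$ and $\sum_k q^{2(m-1)k^2+(2m-1)k+1}$ --- where the identity $(2m-1)^2-8(m-1)=(2m-3)^2$ aligns the two progressions of exponents with the two residue classes carrying $\chi_{4(m-1)}$ --- rewrites the numerator as $q^{-(2m-3)^2/8(m-1)}\sum_{k=0}^{(4m-4)N}\chi_{4(m-1)}(k)\,q^{k^2/8(m-1)}$.

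Finally, this false theta sum vanishes at $q=\zeta_N^M$ by the symmetry \eqref{Cprop2ex2} (pair $k$ with $(4m-4)N-k$; the term $k=0$ and the midpoint $k=2(m-1)N$ vanish individually), so the limit is of the form $0/0$ and can be evaluated by L'H\^opital's rule in the variable $\tau$ with $q=e^{2\pi i\tau}$ and $\tau\to M/N$: the $\tau$-derivative of $1-q^{2N}$ at $M/N$ equals $-4\pi iN$, and the $\tau$-derivative of the numerator brings down a factor $k^2$ inside the sum, so collecting the constants yields precisely the right-hand side of \eqref{finiteeval3}. All of this is routine once the Bailey-pair setup is in place; the one point requiring genuine care is the bookkeeping that ensures $(q^4;q^2)_{N-1}$ contributes only the single vanishing factor $1-q^{2N}$ and that no denominator $(q^2;q^2)_j$ with $j\le N-1$ is zero --- which is exactly where the hypothesis that $N$ is odd enters --- together with checking in the completing-the-square step that the two progressions of exponents exhaust precisely $\{k:0\le k\le(4m-4)N,\ \chi_{4(m-1)}(k)\neq0\}$. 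I expect this last verification to be the main, if minor, obstacle.
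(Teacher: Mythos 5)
Your proposal is correct and follows essentially the same route as the paper: the Bailey pair \eqref{alphaex2}--\eqref{betaex2} with $m\mapsto m-1$ in the $q\to q^2$ version of Lemma \ref{keycorollary} at $n=N-1$, specialization to $q=\zeta_N^M$ using $(q^2;q^2)_{N-1}=N$, completing the square to recognize the false theta sum, and an L'H\^opital-type evaluation of the resulting $0/0$ limit. The only difference is that you spell out the final limit computation and the vanishing of the numerator, which the paper leaves implicit as being analogous to the argument in Section 2.
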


\begin{proof}
Inserting the Bailey pair \eqref{alphaex2} and \eqref{betaex2} into Lemma \ref{keycorollary} with $m \mapsto m-1$ and $n = N-1$ we find 
\\
\begin{align*}
U_{m,N}(\zeta_N^M) &= 
\lim_{q \rightarrow \zeta_N^M}\sum_{k=0}^{N-1}\frac{(q^{2-2N};q^2)_k(1-q^{2k+1})\,q^{2(m-1)k^2+(2m-3)k+2Nk}}{(q^2;q^2)_{N-1}(1-q^{2N})(q^{2N+2};q^2)_k}
\\
&=\frac{1}{N}\lim_{q \rightarrow \zeta_N^M}\frac{\sum\limits_{k=0}^{N-1}(1-q^{2k+1})\,q^{2(m-1)k^2+(2m-3)k}}{1-q^{2N}}
\\
&=\frac{1}{N}\lim_{q \rightarrow \zeta_N^M}\frac{q^{\frac{-(2m-3)^2}{8(m-1)}}\sum\limits_{k=0}^{(4m-4)N}\chi_{4(m-1)}(k)\,q^{\frac{k^2}{8(m-1)}}}{1-q^{2N}}
\\
&=\frac{-1}{16(m-1)N^2}\zeta_N^{\frac{-M(2m-3)^2}{8(m-1)}}\sum\limits_{k=0}^{(4m-4)N}k^2\chi_{4(m-1)}(k)\,\zeta_N^{\frac{Mk^2}{8(m-1)}}.
\end{align*}
\end{proof}

Combining Propositions \ref{ex2prop2} and \ref{ex2prop3} gives Theorem \ref{ex2theorem}.

\subsection{Example 3}
For $m \geq 2$ and $0 \leq a \leq m-2$ define $\widetilde{\Lambda}_m^{(a)}(q)$ by
\begin{equation*}
\begin{aligned}
\widetilde{\Lambda}_m^{(a)}(q) = \frac{2m-1}{2} q^{\frac{(m-a-1)^2}{(2m-1)}} \sum_{n_1, \dots, n_{m-1} \geq 0}& \frac{(-1)^{n_{m-1}}q^{2\binom{n_{m-1}+1}{2}+2n_1^2+\cdots+2n_{m-2}^2+2n_{a+1}+\cdots+ 2n_{m-2}}}{(-q;q^2)_{n_1+\delta_{a,0}}} \\
&\times \prod_{i=1}^{m-2} \begin{bmatrix} n_{i+1}+\delta_{i,a} \\ n_i \end{bmatrix}_{q^2}
\end{aligned}
\end{equation*}
along with its truncated counterpart
\begin{equation*}
V_{m,N}^{(a)}(q) =  \sum_{n_1, \dots, n_{m-1} =  0}^{N-1} \frac{(-1)^{n_{m-1}}q^{2\binom{n_{m-1}+1}{2}+2n_1^2+\cdots+2n_{m-2}^2+2n_{a+1}+\cdots+ 2n_{m-2}}}{(-q;q^2)_{n_1+\delta_{a,0}}}\prod_{i=1}^{m-2} \begin{bmatrix} n_{i+1}+\delta_{i,a} \\ n_i \end{bmatrix}_{q^2}.
\end{equation*}

We will show the following.
\begin{theorem} \label{ex3theorem}
Let $\zeta_N^M$ be a primitive odd $N$th root of unity.    Then 
\begin{equation*}
\lim_{q\rightarrow\zeta_N^M}\widetilde{\Lambda}_m^{(a)}(q) = \zeta_N^{\frac{M(m-a-1)^2}{(2m-1)}} V_{m,N}^{(a)}(\zeta_N^M).
\end{equation*}
\end{theorem}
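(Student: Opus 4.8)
The plan is to follow verbatim the three-step template already used for Examples 1 and 2. I would prove three propositions: (a) an identity exhibiting $\widetilde{\Lambda}_m^{(a)}(q)$ as a weight $1/2$ false theta function $\tfrac{2m-1}{2}\sum_{k\geq 0}\chi^{(a)}(k)\,q^{k^2/(4(2m-1))}$ for a suitable odd periodic function $\chi^{(a)}$ (supported on $k\equiv\pm 2(m-a-1)\pmod{4m-2}$); (b) a formula for $\lim_{q\to\zeta_N^M}\widetilde{\Lambda}_m^{(a)}(q)$ at odd roots of unity; and (c) a formula for $V_{m,N}^{(a)}(\zeta_N^M)$. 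Comparing (b) and (c) then gives Theorem \ref{ex3theorem}, exactly as Theorem \ref{ex1theorem} follows from Propositions \ref{ex1prop2} and \ref{ex1prop3}, and Theorem \ref{ex2theorem} from Propositions \ref{ex2prop2} and \ref{ex2prop3}.

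For (a) I would first locate in \cite{Lov} the Bailey pair relative to $(q^2,q^2)$ whose $\beta_n=\beta_{n_m}$ is the iterated multisum
\[
\beta_{n_m}=\sum_{n_1,\dots,n_{m-1}\geq 0}\frac{q^{2n_1^2+\cdots+2n_{m-1}^2+2n_{a+1}+\cdots+2n_{m-1}}}{(q^2;q^2)_{n_m}\,(-q;q^2)_{n_1+\delta_{a,0}}}\prod_{i=1}^{m-1}\begin{bmatrix}n_{i+1}+\delta_{a,i}\\ n_i\end{bmatrix}_{q^2}
\]
(the $\delta_{a,0}$ in $(-q;q^2)_{n_1+\delta_{a,0}}$ being the boundary modification that handles the case $a=0$, in which none of the $q^2$-binomials carry a shift) and whose $\alpha_n$ has the shape $\tfrac{(1-q^{(a+1)(2n+1)})(-1)^n q^{Q(n)}}{1-q^2}$ for an explicit quadratic $Q$. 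Substituting this pair with $m\mapsto m-1$ into Lemma \ref{keylemma2}, read throughout with $q$ replaced by $q^2$, collapses the $\beta$-side (the $(q^2;q^2)_{n_m}$ cancelling the $(q^2;q^2)_n$ from the lemma) to $\widetilde{\Lambda}_m^{(a)}(q)$ up to the overall prefactor $\tfrac{2m-1}{2}q^{(m-a-1)^2/(2m-1)}$, and the $\alpha$-side to $\sum_{n\geq 0}(1-q^{(a+1)(2n+1)})q^{2\binom{n+1}{2}+Q(n)}$; completing the square on this last sum produces the false theta expression, the prefactor $q^{(m-a-1)^2/(2m-1)}$ being precisely what absorbs its leading exponent.

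For (b), writing $q=e^{2\pi i\tau}$ and $\tau=M/N+it/(2\pi)$, I would introduce the periodic function $C(k)=\chi^{(a)}(k)\,\zeta_N^{Mk^2/(4(2m-1))}$ and check, just as in Proposition \ref{ex1prop2}, that $C$ has period $(4m-2)N$ and satisfies $C((4m-2)N-k)=-C(k)$; both facts use that $N$ is odd. Then $C$ has mean value $0$, Lemma \ref{valuesgeneral} gives the constant term $\tfrac{2m-1}{2}L(0,C)=-\tfrac{2m-1}{2}\sum_{k=1}^{(4m-2)N}C(k)\,B_1\!\left(\tfrac{k}{(4m-2)N}\right)$, and folding this sum about its midpoint $(2m-1)N$ using the antisymmetry of $C$, together with a rearrangement of the shape $\tfrac{2m-1}{2}-\tfrac{k}{2N}=\tfrac{-1}{8(2m-1)N^2}\bigl(k^2-((4m-2)N-k)^2\bigr)$, yields $\lim_{q\to\zeta_N^M}\widetilde{\Lambda}_m^{(a)}(q)=\tfrac{-1}{8(2m-1)N^2}\sum_{k=0}^{(4m-2)N}k^2\chi^{(a)}(k)\,\zeta_N^{Mk^2/(4(2m-1))}$, the exact analogue of Proposition \ref{ex1prop2}. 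For (c), I would insert the same Bailey pair into Lemma \ref{keycorollary} with $q\mapsto q^2$, $m\mapsto m-1$, $n=N-1$; at $q=\zeta_N^M$ with $N$ odd, $q^2$ is again a primitive $N$th root of unity, so the left-hand side of the resulting identity vanishes, $(q^2;q^2)_{N-1}=N$, and $V_{m,N}^{(a)}(\zeta_N^M)$ equals $\tfrac1N\lim_{q\to\zeta_N^M}\tfrac{1}{1-q^{2N}}$ times a finite sum which, after completing the square exactly as in (a), is $q^{-(m-a-1)^2/(2m-1)}\sum_{k=0}^{(4m-2)N}\chi^{(a)}(k)\,q^{k^2/(4(2m-1))}$; an L'H\^opital evaluation of this $0/0$ limit reproduces the closed form from (b), as in Proposition \ref{ex1prop3} and \eqref{finiteeval2}. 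Comparing (b) and (c) finishes the proof.

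I expect the only genuine obstacle to be step (a): pinning down the precise Bailey pair in \cite{Lov}, in particular confirming that $(-q;q^2)_{n_1+\delta_{a,0}}$ is the correct denominator — which, as noted, means treating the case $a=0$ separately from $1\leq a\leq m-2$ — and then determining $Q(n)$, hence $\chi^{(a)}$, through the completing-the-square bookkeeping. Everything downstream is routine given that, the one recurring subtlety being that the restriction to odd $N$ is genuinely needed: it makes $C$ periodic and antisymmetric in (b), it makes $q\mapsto q^2$ a bijection on the $N$th roots of unity and gives $(q^2;q^2)_{N-1}=N$ in (c), and it keeps $V_{m,N}^{(a)}$ pole-free, since the factor $(-q;q^2)_{n_1+\delta_{a,0}}$ in its denominator vanishes at roots of unity of even order.
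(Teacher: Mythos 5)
Your proposal is correct and follows essentially the same route as the paper: the same three propositions (false theta identity via the Bailey pair of \cite[Proposition 5.5]{Lov} in Lemma \ref{keylemma2} with $q\mapsto q^2$, radial limit via Lemma \ref{valuesgeneral}, and evaluation of $V_{m,N}^{(a)}$ via Lemma \ref{keycorollary} with L'H\^opital), compared at the end. The one detail to fix in step (a) --- which you already flagged as the part to be pinned down --- is that the factor in $\alpha_n$ is $(1-q^{(2a+1)(2n+1)})$ rather than $(1-q^{(a+1)(2n+1)})$; this is what actually produces the support $k\equiv\pm 2(m-a-1)\pmod{4m-2}$ that you correctly predicted.
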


The steps should be familiar.    We prove a series of three propositions, beginning with the fact that $\widetilde{\Lambda}_m^{(a)}(q)$ is a false theta function. 

\begin{proposition}
We have the identity
\begin{equation*}
\widetilde{\Lambda}_m^{(a)}(q) =\frac{2m-1}{2}\sum\limits_{k=0}^{\infty}\chi_{2(2m-1)}^{(a)}(k)\,q^{\frac{k^2}{4(2m-1)}},
\end{equation*}
where
$$
\chi_{2(2m-1)}^{(a)}(k)=
\begin{cases}
1, & \mbox{if $k \equiv 2(m-a-1) \pmod{2(2m-1)}$},
\\
-1, & \mbox{if $k \equiv 2(m+a) \pmod{2(2m-1)}$}, \\
0 & \mbox{otherwise}.
\end{cases}
$$
\end{proposition}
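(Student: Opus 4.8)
The plan is to proceed exactly as in the proofs of the two preceding propositions in this section. First one recognizes the multi-sum in $\widetilde{\Lambda}_m^{(a)}(q)$ as coming from a Bailey pair relative to $(q^2,q^2)$, then one uses Lemma \ref{keylemma2} to collapse the sum to a single ``partial theta''-type series, and finally one completes the square to read off the periodic function.

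Concretely, I would start from the Bailey pair relative to $(q^2,q^2)$ in \cite{Lov} carrying the parameter $a$ (through the shifts $\delta_{i,a}$ in the $q^2$-Gaussian polynomials and the $\delta_{a,0}$ in $(-q;q^2)_{n_1+\delta_{a,0}}$), whose members have the shape
\begin{equation*}
\alpha_n = \frac{1-q^{(2a+1)(2n+1)}}{1-q^2}(-1)^nq^{2mn^2+(2m-2a-1)n}
\end{equation*}
and
\begin{equation*}
\beta_n = \beta_{n_m} = \sum_{n_1,\dots,n_{m-1}\geq 0}\frac{q^{2n_1^2+\cdots+2n_{m-1}^2+2n_{a+1}+\cdots+2n_{m-1}}}{(q^2;q^2)_{n_m}(-q;q^2)_{n_1+\delta_{a,0}}}\prod_{i=1}^{m-1}\begin{bmatrix} n_{i+1}+\delta_{i,a}\\ n_i\end{bmatrix}_{q^2}.
\end{equation*}
Substituting this Bailey pair with $m\mapsto m-1$ into Lemma \ref{keylemma2}, and remembering to replace $q$ by $q^2$ throughout, the factor $(q^2;q^2)_{n_{m-1}}$ cancels the denominator of $\beta_{n_{m-1}}$ while the free index contributes $(-1)^{n_{m-1}}q^{2\binom{n_{m-1}+1}{2}}$; after relabeling, the left-hand side is precisely $\widetilde{\Lambda}_m^{(a)}(q)$ divided by the prefactor $\tfrac{2m-1}{2}q^{(m-a-1)^2/(2m-1)}$. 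On the right-hand side everything collapses to
\begin{equation*}
(1-q^2)\sum_{n\geq 0}(-1)^nq^{2\binom{n+1}{2}}\alpha_n = \sum_{n\geq 0}\bigl(1-q^{(2a+1)(2n+1)}\bigr)q^{(2m-1)n^2+2(m-a-1)n}.
\end{equation*}

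It then remains to complete the square. Splitting the last sum into two geometric-type pieces and writing the exponents in the form $\bigl(2(2m-1)n\pm 2(m-a-1)\bigr)^2/(4(2m-1))$ — the second piece, after a shift of the summation index, producing exactly the exponents whose numerators are $\equiv 2(m+a)\pmod{2(2m-1)}$ — one finds that the sum equals $q^{-(m-a-1)^2/(2m-1)}\sum_{k\geq 0}\chi_{2(2m-1)}^{(a)}(k)q^{k^2/(4(2m-1))}$. Multiplying through by $\tfrac{2m-1}{2}q^{(m-a-1)^2/(2m-1)}$ gives the claimed identity.

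I do not expect any genuine obstacle here; the only real work is bookkeeping. One must verify that the cited Bailey pair's $\beta$ matches the summand of $\widetilde{\Lambda}_m^{(a)}(q)$ on the nose — in particular that the $\delta_{a,0}$-correction in $(-q;q^2)_{n_1+\delta_{a,0}}$ and the $\delta_{i,a}$-shifts in the $q^2$-binomials survive the $m\mapsto m-1$ substitution correctly — and that the constants in the completing-the-square step (the overall power $q^{(m-a-1)^2/(2m-1)}$ together with the two residue classes $2(m-a-1)$ and $2(m+a)$ modulo $2(2m-1)$, which are visibly compatible with $\chi_{2(2m-1)}^{(a)}$ being odd) come out exactly right. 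This is structurally identical to the proofs of the two earlier propositions in this section.
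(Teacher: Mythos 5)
Your proposal is correct and follows exactly the paper's proof: the same Bailey pair from \cite[Proposition 5.5]{Lov} (your exponent $2mn^2+(2m-2a-1)n$ is just the expanded form of the paper's $2(m-a-1)(n^2+n)+2(a+1)n^2+n$), the same substitution $m\mapsto m-1$ into Lemma \ref{keylemma2} with $q$ replaced by $q^2$, and the same completion of the square yielding the residue classes $2(m-a-1)$ and $2(m+a)$ modulo $2(2m-1)$. No gaps.
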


\begin{proof}
We begin with the Bailey pair relative to $(q^2,q^2)$ \cite[Proposition 5.5]{Lov}\footnote{We note that there is a typo in Proposition 5.5 of \cite{Lov}.   The term $(1+x^{2a+1}q^{(2a+1)(2n+1)})$ should be $(1-x^{2a+1}q^{(2a+1)(2n+1)})$.},
\begin{equation} \label{alphaex3}
\alpha_n = \frac{1}{1-q^2}(-1)^nq^{2(m-a-1)(n^2+n) + 2(a+1)n^2 + n}(1-q^{(2a+1)(2n+1)})
\end{equation}
and
\begin{equation} \label{betaex3}
\beta_n = \beta_{n_m} = \sum_{n_1, \dots, n_{m-1} \geq 0} \frac{q^{2n_1^2+\cdots+2n_{m-1}^2+2n_{a+1}+\cdots+2n_{m-1}}}{(q^2;q^2)_{n_m}(-q;q^2)_{n_1+\delta_{a,0}}}\prod_{i=1}^{m-1} \begin{bmatrix} n_{i+1}+\delta_{i,a} \\ n_i \end{bmatrix}_{q^2}.
\end{equation}
Here $m \geq 1$ and $0 \leq a \leq m-1$.   When $m=1$ (and $a=0$) we have
$$
\beta_n = \beta_{n_1} = \frac{1}{(q^2;q^2)_{n_1}(-q;q^2)_{n_1+1}}.
$$
Using the case $m\mapsto m-1$ of this Bailey pair in Lemma \ref{keylemma2} (with $q=q^2$) we have
\begin{equation*}
\widetilde{\Lambda}_m^{(a)}(q) = \frac{2m-1}{2} q^{\frac{(m-a-1)^2}{(2m-1)}} \sum_{n=0}^{\infty}(1-q^{(2a+1)(2n+1)})\,q^{(2m-1)n^2+2(m-a-1)n},
\end{equation*}
and this gives the result after rewriting the right-hand side.
\end{proof}

Now we compute the of limiting values of $\Lambda_m^{(a)}(q)$ as $q$ approaches a root of unity.   

\begin{proposition} \label{ex3prop2}
For coprime integers $M$ and $N$ with $N$ positive we have
\begin{equation*}
\lim_{q\rightarrow\zeta_N^M}\widetilde{\Lambda}_m^{(a)}(q) = \frac{-1}{8(2m-1)N^2}\sum\limits_{k=0}^{(4m-2)N}k^2\chi_{2(2m-1)}^{(a)}(k)\,\zeta_N^{\frac{Mk^2}{4(2m-1)}}.
\end{equation*}
\end{proposition}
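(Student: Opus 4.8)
The proof proceeds exactly along the lines of Propositions \ref{ex1prop2} and \ref{ex2prop2}, using the false theta representation just established. Writing $q=e^{2\pi i\tau}$ and setting $\tau=\frac{M}{N}+i\frac{t}{2\pi}$, one obtains
\begin{equation*}
\widetilde{\Lambda}_m^{(a)}\!\left(\frac{M}{N}+i\frac{t}{2\pi}\right)=\frac{2m-1}{2}\sum_{k\geq 0}C_{(4m-2)N}(k)\,e^{-\frac{k^2}{4(2m-1)}t},
\end{equation*}
where $C_{(4m-2)N}(k)=\chi_{2(2m-1)}^{(a)}(k)\,e^{\frac{Mk^2}{(4m-2)N}\pi i}$. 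The first step is to check that $C_{(4m-2)N}$ is periodic of modulus $(4m-2)N$ and satisfies $C_{(4m-2)N}((4m-2)N-k)=-C_{(4m-2)N}(k)$. Since $\chi_{2(2m-1)}^{(a)}$ has period $2(2m-1)=4m-2$ and is odd, only the exponential factor requires attention, and the extra contributions to the exponent produced by the shifts $k\mapsto k+(4m-2)N$ and $k\mapsto(4m-2)N-k$ are integer multiples of $2\pi i$. I would stress here that, unlike in Examples 1 and 2, this goes through for \emph{every} positive $N$, not just odd $N$: the difference is that $\widetilde{\Lambda}_m^{(a)}$ carries the power $q^{k^2/4(2m-1)}$ rather than $q^{k^2/8(2m-1)}$, so the relevant exponent denominator is $(4m-2)N$ and no parity of $N$ is needed. (The restriction to odd $N$ in Theorem \ref{ex3theorem} is imposed solely by the symbol $(-q;q^2)_{n_1+\delta_{a,0}}$ in the denominator of $V_{m,N}^{(a)}$.) From the antisymmetry one deduces $\sum_{k=1}^{(4m-2)N}C_{(4m-2)N}(k)=0$, so Lemma \ref{valuesgeneral} is applicable.

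Applying Lemma \ref{valuesgeneral} with modulus $f=(4m-2)N$, the constant term of the asymptotic expansion as $t\searrow 0$ gives
\begin{equation*}
\lim_{\tau\downarrow\frac{M}{N}}\widetilde{\Lambda}_m^{(a)}(\tau)=\frac{2m-1}{2}L(0,C_{(4m-2)N})=-\frac{2m-1}{2}\sum_{k=1}^{(4m-2)N}C_{(4m-2)N}(k)\,B_1\!\left(\frac{k}{(4m-2)N}\right).
\end{equation*}
Using $B_1(x)=x-\tfrac12$ and discarding the constant part by mean value zero, this equals $-\frac{1}{4N}\sum_{k=0}^{(4m-2)N}k\,C_{(4m-2)N}(k)$.

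From this point the manipulation is a transcription of the end of the proof of Proposition \ref{ex1prop2}. Splitting the sum at $k=(2m-1)N$ and applying the antisymmetry in the upper half folds it into $\sum_{k=0}^{(2m-1)N}C_{(4m-2)N}(k)\bigl(\tfrac{2m-1}{2}-\tfrac{k}{2N}\bigr)$; one then writes $\tfrac{2m-1}{2}-\tfrac{k}{2N}=\tfrac{-1}{8(2m-1)N^2}\bigl(k^2-((4m-2)N-k)^2\bigr)$, separates into two sums, and in the second sum uses the antisymmetry once more together with the substitution $k\mapsto(4m-2)N-k$. The overlap at the midpoint $k=(2m-1)N$ is harmless, since $(2m-1)N$ is odd when $N$ is odd and $\equiv 0\pmod{2(2m-1)}$ when $N$ is even, whereas $2(m-a-1)$ and $2(m+a)$ are nonzero even residues, so $\chi_{2(2m-1)}^{(a)}((2m-1)N)=0$ in either case. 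The two halves recombine into $\tfrac{-1}{8(2m-1)N^2}\sum_{k=0}^{(4m-2)N}k^2C_{(4m-2)N}(k)$, and unwinding $C_{(4m-2)N}(k)=\chi_{2(2m-1)}^{(a)}(k)\,\zeta_N^{Mk^2/4(2m-1)}$ yields the stated formula. The only genuinely new point compared with the earlier examples is the verification of the modulus and antisymmetry of $C_{(4m-2)N}$, and in particular the observation that it requires no hypothesis on $N$; that is the step I would expect to spell out carefully, while everything downstream is routine bookkeeping.
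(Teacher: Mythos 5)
Your proposal is correct and follows essentially the same route as the paper: the same function $C_{2(2m-1)N}(k)$ (you call it $C_{(4m-2)N}$), the same application of Lemma \ref{valuesgeneral} with $B_1(x)=x-\tfrac12$, and the same folding of the sum at $k=(2m-1)N$ via the antisymmetry. Your explicit verification that the periodicity and antisymmetry of $C_{(4m-2)N}$ need no parity hypothesis on $N$ (and that $\chi_{2(2m-1)}^{(a)}((2m-1)N)=0$ in all cases) is a welcome elaboration of details the paper leaves implicit, but it does not change the argument.
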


\begin{proof}
Regarding $\widetilde{\Lambda}_m^{(a)}$ as a function of $\tau$ with $q = e^{2 \pi i \tau}$, we have

\begin{align*}
\widetilde{\Lambda}_m^{(a)}\left(\frac{M}{N}+i\frac{t}{2\pi}\right)&=\frac{2m-1}{2}\sum_{k=0}^{\infty}\chi_{2(2m-1)}^{(a)}(k)e^{2\pi i \left(\frac{M}{N}+i\frac{t}{2\pi}\right) \frac{k^2}{4(2m-1)}}
\\
&=\frac{2m-1}{2}\sum_{k=0}^{\infty}C_{2(2m-1)N}(k)e^{ \frac{-k^2}{4(2m-1)}t},
\end{align*}
where 
$$
C_{2(2m-1)N}(k)=\chi_{2(2m-1)}^{(a)}(k)e^{\frac{Mk^2}{2(2m-1)N}\pi i}.
$$
We note that 
\begin{equation*} \label{Cprop1ex3}
C_{2(2m-1)N}(k+2(2m-1)N)=C_{2(2m-1)N}(k)
\end{equation*}
and 
\begin{equation} \label{Cprop2ex3}
C_{2(2m-1)N}(2(2m-1)N-k)=-C_{2(2m-1)N}(k).
\end{equation}
Then using Lemma \ref{valuesgeneral} we have the asymptotic expansion as $t \searrow 0$,

$$\widetilde{\Lambda}_m^{(a)}\left(\frac{M}{N}+i\frac{t}{2\pi}\right) \sim \sum_{k=0}^{\infty}\dfrac{L(-2k,C_{2(2m-1)N})}{k!}\left(-\dfrac{t}{4(2m-1)}\right)^k.$$
Using this and keeping in mind \eqref{Cprop2ex3} we compute the limiting value
\begin{align*}
\lim_{\tau \downarrow \frac{M}{N}} \widetilde{\Lambda}_m^{(a)}(\tau)&=\frac{2m-1}{2}L(0,C_{2(2m-1)N})
\\
&=-\frac{2m-1}{2}\sum_{k=1}^{2(2m-1)N}C_{2(2m-1)N}(k)B_1\left(\dfrac{k}{2(2m-1)N}\right)
\\
&=\frac{2m-1}{2}\sum_{k=1}^{2(2m-1)N}C_{2(2m-1)N}(k)\left(\frac{1}{2} - \dfrac{k}{2(2m-1)N}\right)
\\
&=\frac{2m-1}{2}\sum_{k=0}^{(2m-1)N}C_{2(2m-1)N}(k)\left(1-\dfrac{k}{(2m-1)N}\right)
\\
&=\sum_{k=0}^{(2m-1)N}C_{2(2m-1)N}(k)\left(\frac{2m-1}{2}-\dfrac{k}{2N}\right)
\\
&=\dfrac{-1}{8(2m-1)N^2}\Bigg(\sum_{k=0}^{(2m-1)N}k^2C_{2(2m-1)N}(k) \\
&\ \ \ \ \ \ \ \ \ \ \ \ \ \ \ \ \ \ \ \ \ \ \ \ \ \ \ \ -\sum_{k=0}^{(2m-1)N}(2(2m-1)N-k)^2C_{2(2m-1)N}(k)\Bigg)
\\
&=\dfrac{-1}{8(2m-1)N^2}\Bigg(\sum_{k=0}^{(2m-1)N}k^2C_{2(2m-1)N}(k) \\
&\ \ \ \ \ \ \ \ \ \ \ \ \ \ \ \ \ \ \ \ \ \ \ \ \ \ \ \ +\sum_{k=0}^{(2m-1)N}(2(2m-1)N-k)^2C_{2(2m-1)N}(2(2m-1)N - k)\Bigg)
\\
&=\dfrac{-1}{8(2m-1)N^2}\left(\sum_{k=0}^{(2m-1)N}k^2C_{2(2m-1)N}(k)+\sum_{k=(2m-1)}^{2(2m-1)N}k^2C_{2(2m-1)N}(k)\right)
\\
&=\frac{-1}{8(2m-1)N^2}\sum\limits_{k=0}^{(4m-2)N}k^2\chi_{2(2m-1)}^{(a)}(k)\,e^{\frac{k^2}{2(2m-1)N}M \pi i},
\end{align*}
which gives the result.
\end{proof}

\begin{proposition}  \label{ex3prop3}
For $q = \zeta_N^M$, a primitive odd $N$-th root of unity, we have
\begin{equation} \label{finiteeval4}
V_{m,N}^{(a)}(q) =\frac{-1}{8(2m-1)N^2}\zeta_N^{\frac{-M(m-a-1)^2}{(2m-1)}}\sum\limits_{k=0}^{(4m-2)N}k^2\chi_{2(2m-1)}^{(a)}(k)\,\zeta_N^{\frac{Mk^2}{4(2m-1)}}.
\end{equation}
\end{proposition}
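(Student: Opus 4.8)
The plan is to follow the template already used for Propositions~\ref{ex1prop3} and \ref{ex2prop3}, since the Bailey pair \eqref{alphaex3}--\eqref{betaex3} was chosen precisely so that this works. First I would insert that pair, with $m\mapsto m-1$, into Lemma~\ref{keycorollary} with $n=N-1$, remembering that the pair is relative to $(q^2,q^2)$ so that $q$ is replaced by $q^2$ throughout the lemma. Plugging \eqref{alphaex3} into the right-hand side and combining the exponent $2\binom{k+1}{2}+2Nk$ coming from the lemma with the exponent in $\alpha_k$ (the $q^2$-coefficients add up to $2m-1$ and the $q$-coefficients to $2(m-a-1)+2N$, exactly as in the false theta identity for $\widetilde{\Lambda}_m^{(a)}$), one finds that the right-hand side equals
\begin{equation*}
\frac{1}{1-q^2}\sum_{k=0}^{N-1}\frac{(q^{2-2N};q^2)_k}{(q^{2N+2};q^2)_k}(1-q^{(2a+1)(2k+1)})\,q^{(2m-1)k^2+2(m-a-1)k+2Nk}.
\end{equation*}
On the left-hand side, after using $(q^2;q^2)_{N-1-j}=\frac{(q^2;q^2)_{N-1}}{(q^{2-2N};q^2)_j}(-1)^jq^{2\binom{j}{2}-2(N-1)j}$ to convert the $(q^2;q^2)$-factorials of $\beta$ into $q^2$-binomial coefficients, one gets a sum over $n_1,\dots,n_{m-1}$ whose summand is $(q^{2-2N};q^2)_{n_{m-1}}(-1)^{n_{m-1}}q^{2Nn_{m-1}+2\binom{n_{m-1}+1}{2}+2n_1^2+\dots+2n_{m-2}^2+2n_{a+1}+\dots+2n_{m-2}}$ divided by $(q^2;q^2)_{n_{m-1}}(-q;q^2)_{n_1+\delta_{a,0}}$, times $\prod_{i=1}^{m-2}\begin{bmatrix}n_{i+1}+\delta_{i,a}\\n_i\end{bmatrix}_{q^2}$. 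Dividing the identity by the prefactor $(q^4;q^2)_{N-1}=\tfrac{(q^2;q^2)_N}{1-q^2}$ puts it into the shape used in the proof of Proposition~\ref{ex2prop3}.

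Next I would set $q=\zeta_N^M$ with $N$ odd. Since $\gcd(2,N)=1$, $q^2$ is again a primitive $N$th root of unity, so $(q^2;q^2)_{N-1}=N$, $q^{2-2N}=q^2=q^{2N+2}$, and for $0\le j\le N-1$ the ratios $(q^{2-2N};q^2)_j/(q^{2N+2};q^2)_j$ and the factors $q^{2Nj}$ tend to $1$. In particular $(q^{2-2N};q^2)_{n_{m-1}}/(q^2;q^2)_{n_{m-1}}\to1$ and $q^{2Nn_{m-1}}\to1$, so at $q=\zeta_N^M$ the left-hand side divided by the prefactor equals $V_{m,N}^{(a)}(\zeta_N^M)$; the restriction to odd roots of unity is what guarantees $(-q;q^2)_{n_1+\delta_{a,0}}\ne0$ so that no poles arise. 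Pulling the common factor $\tfrac1{1-q^{2N}}$ out of the right-hand side and letting the remaining $q$-Pochhammer ratios tend to their values at $\zeta_N^M$, we are left with
\begin{equation*}
V_{m,N}^{(a)}(\zeta_N^M)=\frac1N\lim_{q\to\zeta_N^M}\frac{1}{1-q^{2N}}\sum_{k=0}^{N-1}(1-q^{(2a+1)(2k+1)})\,q^{(2m-1)k^2+2(m-a-1)k}.
\end{equation*}
Completing the square in the two pieces of the sum --- writing the exponents as $\tfrac{k'^2}{4(2m-1)}-\tfrac{(m-a-1)^2}{2m-1}$ with $k'\equiv 2(m-a-1)$ and $k'\equiv 2(m+a)\pmod{2(2m-1)}$ respectively, and checking that as $k$ runs from $0$ to $N-1$ these $k'$ sweep out exactly the support of $\chi_{2(2m-1)}^{(a)}$ inside $[0,(4m-2)N]$ --- turns the inner sum into $q^{-(m-a-1)^2/(2m-1)}\sum_{k=0}^{(4m-2)N}\chi_{2(2m-1)}^{(a)}(k)q^{k^2/(4(2m-1))}$.

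Finally, at $q=\zeta_N^M$ this truncated false theta vanishes: $C(k)=\chi_{2(2m-1)}^{(a)}(k)q^{k^2/(4(2m-1))}$ is, at a root of unity, periodic of period $(4m-2)N$ and satisfies $C((4m-2)N-k)=-C(k)$, which is the cancellation used in the proof of Theorem~\ref{mainthm}. Hence the limit is of the form $0/0$, and applying L'H\^opital's rule --- using that $q^{-(m-a-1)^2/(2m-1)}\sum_k\chi_{2(2m-1)}^{(a)}(k)q^{k^2/(4(2m-1))}$ is a genuine Laurent polynomial in $q$ since $(m+a)^2-(m-a-1)^2=(2m-1)(2a+1)$, that $\tfrac{d}{dq}(1-q^{2N})=-2N\zeta_N^{-M}$ at $q=\zeta_N^M$, and that the vanishing of the numerator lets one discard the constant $-(m-a-1)^2/(2m-1)$ in the exponent when differentiating --- yields
\begin{equation*}
V_{m,N}^{(a)}(\zeta_N^M)=\frac{-1}{8(2m-1)N^2}\,\zeta_N^{\frac{-M(m-a-1)^2}{2m-1}}\sum_{k=0}^{(4m-2)N}k^2\chi_{2(2m-1)}^{(a)}(k)\,\zeta_N^{\frac{Mk^2}{4(2m-1)}},
\end{equation*}
which is \eqref{finiteeval4}.

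The step I expect to be the main obstacle is the first one: matching the $\beta$-side output of Lemma~\ref{keycorollary} with the polynomial $V_{m,N}^{(a)}$, that is, carrying out the $q\mapsto q^2$ replacement and the $m\mapsto m-1$ shift simultaneously, converting the $(q^2;q^2)$-factorials into $q^2$-binomials, and keeping straight the $\delta_{a,0}$ shift in $(-q;q^2)_{n_1+\delta_{a,0}}$ together with the cutoffs $n_i\le N-1$. The analytic part --- completing the square, checking the endpoints of the summation ranges, and evaluating the $0/0$ limit at the root of unity --- is routine, being essentially identical to the computations already carried out for Theorem~\ref{mainthm} and Propositions~\ref{ex1prop3} and \ref{ex2prop3}.
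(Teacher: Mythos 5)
Your proposal is correct and follows essentially the same route as the paper: insert the Bailey pair \eqref{alphaex3}--\eqref{betaex3} (with $m\mapsto m-1$, $q\mapsto q^2$) into Lemma \ref{keycorollary} at $n=N-1$, identify the $\beta$-side with $V_{m,N}^{(a)}$ at the root of unity using $(q^2;q^2)_{N-1}=N$ and $q^{2-2N}=q^{2N+2}=q^2$, complete the square on the $\alpha$-side, and resolve the resulting $0/0$ limit by differentiation. The only difference is that you spell out the intermediate steps (the conversion of the left-hand side and the L'H\^opital evaluation) that the paper leaves implicit; the exponent bookkeeping and the divisibility check $(m+a)^2-(m-a-1)^2=(2m-1)(2a+1)$ are all consistent with the paper's displayed computation.
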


\begin{proof}
Inserting the Bailey pair from \eqref{alphaex3} and \eqref{betaex3} with $m\mapsto m-1$ into Lemma \ref{keycorollary} with $n=N-1$, we obtain
\begin{align*}
V_{m,N}^{(a)}&(\zeta_N^M) \\
&=\lim_{q \to \zeta_M^N} \sum_{k=0}^{N-1}\frac{(q^{2-2N};q^2)_k(1-q^{(2a+1)(2k+1)})q^{2(a+1)k^2+k+2(m-a-1)(k^2+k)+2Nk-2k-k(k-1)}}{(q^2;q^2)_{N-1}(1-q^{2N})(q^{2N+2};q^2)_k} \\
&=\frac{1}{N}\lim_{q \rightarrow \zeta_N^M}\frac{\sum\limits_{k=0}^{N-1}(1-q^{(2a+1)(2k+1)})\,q^{(2m-1)k^2+2(m-a-1)k}}{1-q^{2N}}
\\
&=\frac{1}{N}\lim_{q \rightarrow \zeta_N^M}\frac{q^{\frac{-(m-a-1)^2}{(2m-1)}}\sum\limits_{k=0}^{(4m-2)N}\chi_{2(2m-1)}^{(a)}(k)\,q^{\frac{k^2}{4(2m-1)}}}{1-q^{2N}}
\\
&=\frac{-1}{8(2m-1)N^2}\zeta_N^{\frac{-M(m-a-1)^2}{(2m-1)}}\sum\limits_{k=0}^{(4m-2)N}k^2\chi_{2(2m-1)}^{(a)}(k)\,\zeta_N^{\frac{Mk^2}{4(2m-1)}}.
\end{align*}
\end{proof}

Theorem \ref{ex3theorem} now follows upon comparing Propositions \ref{ex3prop2} and \ref{ex3prop3}.


\section{Concluding Remarks}
In closing we observe that the evaluations in \eqref{finiteeval1}, \eqref{finiteeval2}, \eqref{finiteeval3}, and \eqref{finiteeval4} can be used to obtain $q$-series identities that hold at roots of unity but not inside the unit disk.   This is in the spirit of quantum $q$-series identities described in \cite{Fo-Me1,Lo1,Lov}.    For instance, if we take $m \mapsto 2m-1$ and $a \mapsto 2a$ in \eqref{finiteeval1} and compare with \eqref{finiteeval4}, we have that if $q$ is a primitive odd $N$th root of unity, then
\begin{equation*}
\begin{aligned}
&\sum_{n_1,\dots, n_{2m-2} =0}^{N-1} (-1)^{n_{2m-2}}q^{\binom{n_{2m-2} + 1}{2} + n_1^2 + \cdots + n_{2m-3}^2 + n_{2a+1} + \cdots + n_{2m-3}} \prod_{i=1}^{2m-3} \begin{bmatrix} n_{i+1} + \delta_{i,2a} \\ n_i \end{bmatrix} \\
&= 2\sum_{n_1, \dots, n_{m-1} =  0}^{N-1} \frac{(-1)^{n_{m-1}}q^{2\binom{n_{m-1}+1}{2}+2n_1^2+\cdots+2n_{m-2}^2+2n_{a+1}+\cdots+ 2n_{m-2}}}{(-q;q^2)_{n_1+\delta_{a,0}}}\prod_{i=1}^{m-2} \begin{bmatrix} n_{i+1}+\delta_{i,a} \\ n_i \end{bmatrix}_{q^2}.
\end{aligned}
\end{equation*}
For another example, comparing the case $m \mapsto 2m-2$ and $a=0$ of \eqref{finiteeval1} with \eqref{finiteeval3}, we have that if $q$ is a primitive odd $N$th root of unity, then
\begin{equation*}
\begin{aligned}
&\sum_{n_1,\dots, n_{2m-3} =0}^{N-1} (-1)^{n_{2m-3}}q^{\binom{n_{2m-3} + 1}{2} + n_1^2 + \cdots + n_{2m-4}^2+n_{1} + \cdots + n_{2m-4}} \prod_{i=1}^{2m-4} \begin{bmatrix} n_{i+1} \\ n_i \end{bmatrix} \\
&= 2\sum_{n_1, \dots, n_{m-1} =  0}^{N-1} \frac{(-1)^{n_{m-1}}q^{2\binom{n_{m-1}+1}{2}+2n_1^2+\cdots+2n_{m-2}^2+2n_1+\cdots+ 2n_{m-2}}(q;q^2)_{n_1}}{(-q)_{2n_1+1}}\prod_{i=1}^{m-2} \begin{bmatrix} n_{i+1} \\ n_i \end{bmatrix}_{q^2}.
\end{aligned}
\end{equation*}
Note that the left-hand sides of the above two equations are essentially Kashaev invariants for certain torus links, as shown by Hikami \cite{Hik}, and so the right-hand sides provide alternative expressions for these invariants.

\section{Acknowledgements}
The authors would like to thank the Unversit\'e Paris Cit\'e for hosting the second author's research visit and both the French Embassy in the United States and the Centre National de la Recherche Scientifique for making this visit possible through a Chateaubriand Fellowship.   They also thank the referee for several important corrections and suggestions.


\end{document}